\newtheorem{thm}{Theorem}[section]
\newtheorem{lemma}[thm]{Lemma}
\newtheorem{corol}[thm]{Corollary}
\newtheorem{prop}[thm]{Proposition}
\theoremstyle{definition}
\newtheorem{defi}[thm]{Definition}
\theoremstyle{remark}
\newtheorem{oss}[thm]{Remark}
\newcommand{\C}{\mathbb C}
\newcommand{\R}{\mathbb R}
\newcommand{\T}{\mathbb T}
\newcommand{\dez}[1]{\frac{\partial}{\partial #1}}
\newcommand{\lie}{\mathfrak}
\DeclareMathOperator{\Iso}{Iso}
\DeclareMathOperator{\Aut}{Aut}
\DeclareMathOperator{\End}{End}
\DeclareMathOperator{\rea}{Re}
\DeclareMathOperator{\img}{Im}
\DeclareMathOperator{\id}{id}
\DeclareMathOperator{\Ric}{Ric}
\DeclareMathOperator{\Diffeo}{Diffeo}
\DeclareMathOperator{\const}{const}
\DeclareMathOperator{\vol}{vol}
\renewcommand{\phi}{\varphi}
\renewcommand{\epsilon}{\varepsilon}
\renewcommand{\bar}{\overline}
\renewcommand{\tilde}{\widetilde}
\renewcommand{\Re}{\rea}
\renewcommand{\Im}{\img}
\newcommand{\lieder}{\mathcal L}
\newcommand{\desharp}{\partial^\sharp}
\newcommand{\de}{\partial}
\newcommand{\debar}{{\bar \partial}}
\DeclareMathOperator{\grad}{grad}
\newcommand{\restr}{ |_{(0,0,0)} }
\newcommand{\param}{{\alpha, \phi}}
\newcommand{\call}{\mathcal}
\newcommand{\nablat}{\nabla^T}
\newcommand{\Fol}{\mathrm{Fol}}
\newcommand{\fol}{\lie{fol}}
\newcommand{\CinfB}{C^\infty_{\textup B}}
\newcommand{\deB}{\partial_{\textup B}}
\newcommand{\dB}{d_{\textup B}}
\newcommand{\deltaB}{\delta_{\textup B}}
\newcommand{\debarB}{\bar \partial _{\textup B}}
\newcommand{\laplB}{\Delta_{\textup B}}
\newcommand{\basicforms}{\Omega_{\textup B}}
\newcommand{\HB}{H_{\textup B}}
\newcommand{\cB}{c_1^\textup{B}}
\newcommand{\paramt}{{t, \alpha, \phi}}
\renewcommand{\param}{\paramt}
\title{On Sasaki-Ricci solitons and their deformations}
\author{David Petrecca}
\address{Dipartimento di Matematica\\ Universit\`a di Pisa \endgraf Largo Pontecorvo 5 \endgraf 56127 Pisa, Italy}
\email{petrecca at mail.dm.unipi.it}
\subjclass[2010]{53C25}
\keywords{Sasakian manifolds, Sasaki-Ricci solitons, deformations}
\begin{document}
\maketitle
\begin{abstract}We extend to the Sasakian setting a result of Tian and Zhu about the decomposition of the Lie algebra of holomorphic vector fields on a K\"ahler manifold in the presence of a K\"ahler-Ricci soliton. Furthermore we apply known deformations of Sasakian structures to a Sasaki-Ricci soliton to obtain a stability result concerning \emph{generalized} Sasaki-Ricci solitons, generalizing Li in the K\"ahler setting and also He and Song by relaxing some of their assumptions.
\end{abstract}

\section*{Introduction}
Sasakian geometry is often referred to as the odd dimensional analogue of K\"ahler geometry. A Sasakian structure sits between two K\"ahler structures, namely the one on its Riemannian cone and the one on the normal bundle of its Reeb foliation. It is then natural to ask whether known results in K\"ahler geometry extend to the Sasakian setting.

A prominent role is played by Sasaki-Einstein manifolds, also due to their application in physics in the so-called AdS/CFT correspondence. There is a large number of examples and techniques to build Sasaki-Einstein manifolds, see e.g. \cite[Chap.~5]{monoBG} and \cite{sparks_survey} and the references therein. As an example of interrelation between the Sasakian structure and the two K\"ahler structures, we mention that a manifold is Sasaki-Einstein if and only if its transverse K\"ahler structure is K\"ahler-Einstein if and only if the Riemannian cone is Ricci-flat.

A possible generalization of Sasaki-Einstein metrics is to consider \emph{transverse K\"ahler-Ricci solitons}, also known as \emph{Sasaki-Ricci solitons}. A Sasakian structure $(\eta, g)$ on $M$ is said to be a Sasaki-Ricci soliton if there exists a Hamiltonian holomorphic complex vector field $X$ on $M$ (see Definition \ref{def:hamholo}) such that the Ricci form $\rho^T$ and the transverse K\"ahler form $\omega^T = \frac 1 2 d\eta$ satisfy
\begin{equation}\label{SRS}
\rho^T - (2n+2) \omega^T = \lieder_X \omega^T.
\end{equation}

Their K\"ahlerian counterparts are then metrics whose Ricci and K\"ahler forms satisfy $\rho - \omega = \lieder_X \omega$ for a vector field $X$ on $M$ which turns out to be \emph{holomorphic}. They have been extensively studied, one possible motivation is that they are special solutions of the K\"ahler-Ricci flow (see e.g. \cite{chow} and the references therein, see also \cite{SRF} for an introduction to the transverse K\"ahler-Ricci flow, called \emph{Sasaki-Ricci flow}). 

The presence of a K\"ahler-Ricci soliton on a compact K\"ahler manifold $M$ can give information about the Lie groups and Lie algebras of transformations of $M$. Namely Tian and Zhu prove, among other things, the following theorem.
\begin{thm}[\cite{tz_uniq}] The Lie algebra $\lie h$ of holomorphic vector fields on a K\"ahler-Ricci soliton with vector field $X$ admits the splitting
\[
\lie h = \lie h_0 \oplus \bigoplus_{\lambda>0} \lie h_\lambda
\]
where $\lie h_0$ is the centralizer in $\lie h$ of $X$ and in turn splits $\lie h_0 = \lie h_0' \oplus \lie h_0''$ as the space of $(1,0)$-gradients of real and purely imaginary functions and $\lie h_\lambda = \{ Y \in \lie h: [X, Y] = \lambda Y \}$.

The space $\lie h_0''$ is identified with the Lie algebra of the isometry group $\Iso(M)$, whose connected component is maximal compact in the connected component $\Aut(M)^0$ of the group of holomorphic automorphisms.
 \end{thm}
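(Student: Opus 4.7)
The plan is to study the adjoint action $\operatorname{ad}_X : \mathfrak{h} \to \mathfrak{h}$, $Y \mapsto [X,Y]$, through a potential representation of holomorphic vector fields adapted to the soliton. First I would take $d$ of the soliton identity and use the $\partial\bar\partial$-lemma to produce a complex Hamiltonian $\theta_X$ with $i_X \omega = \bar\partial \theta_X$; because $c_1(M) > 0$ is forced, the same holds for every $Y \in \mathfrak{h}$, giving a potential $f_Y$ with $i_Y \omega = \bar\partial f_Y$, made unique by the weighted normalization $\int_M f_Y \, e^{\theta_X} \omega^n = 0$. This realizes $\mathfrak{h}$ as a finite-dimensional space $\mathcal{V}$ of smooth complex functions.

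Next I would introduce the weighted elliptic operator $L(f) = \bar\Delta f - X(f) - f$, which is formally self-adjoint with respect to the measure $e^{\theta_X} \omega^n/n!$ by an integration-by-parts identity in which the soliton equation exactly cancels the Ricci contribution. A Bochner-type computation then identifies $\mathcal{V} = \ker L$. Under this isomorphism, a direct calculation shows that $\operatorname{ad}_X$ becomes the first-order operator $f \mapsto X(f) + c(f)$ on $\ker L$, with $c(f)$ a constant chosen to preserve the normalization, and this operator commutes with $L$.

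The core step is the spectral analysis of $P := \operatorname{ad}_X$ on $\mathfrak{h}$: semisimplicity, reality, and non-negativity of the eigenvalues. Semisimplicity follows from the commutation with the self-adjoint $L$ together with the finite-dimensionality of $\mathcal{V}$. Establishing that the spectrum of $P$ lies in $\mathbb{R}_{\geq 0}$ is the main obstacle: one must construct a Hermitian form on $\mathfrak{h}$, built from the weighted $L^2$-pairing of potentials, against which $P$ is self-adjoint and non-negative. This reduces to integrating $|\bar\partial f|^2 e^{\theta_X}$ by parts and invoking $\rho - \omega = i \partial \bar\partial \theta_X$ to control the lower-order terms. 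With these in place, $\mathfrak{h}_0$ is by definition the $0$-eigenspace and the decomposition $\mathfrak{h} = \mathfrak{h}_0 \oplus \bigoplus_{\lambda > 0} \mathfrak{h}_\lambda$ follows.

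For the refinement $\mathfrak{h}_0 = \mathfrak{h}_0' \oplus \mathfrak{h}_0''$ and the identification with $\operatorname{Lie}(\Iso(M))$, I would observe that on the zero-eigenspace complex conjugation of potentials is compatible with the normalization, so $\mathcal{V}_0$ splits orthogonally into its real and purely imaginary parts, yielding $\mathfrak{h}_0'$ and $\mathfrak{h}_0''$ respectively. A weighted Matsushima-type Bochner identity then shows that purely imaginary gradients in $\ker L$ have vanishing symmetric covariant derivative, i.e.\ are Killing. The maximality of $\Iso(M)^0$ inside $\Aut(M)^0$ follows by the standard averaging argument: any larger compact subgroup can be used to average $g$ to a soliton metric invariant under it, and Tian-Zhu's uniqueness theorem for Kähler-Ricci solitons up to $\Aut(M)^0$ (proved separately via a weighted Perelman-type functional) then forces the larger group to lie inside $\Iso(M)^0$.
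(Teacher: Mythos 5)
The paper does not prove this statement: it is quoted verbatim from \cite{tz_uniq} as background, so there is no in-paper proof to compare against. The closest material is Section \ref{sec:inftransf}, where the same blueprint is adapted to prove the Sasakian analogue (Theorem \ref{thm:decomp}), and your outline matches that blueprint --- and Tian--Zhu's original argument --- quite closely: realize $\lie h$ as a space of normalized potentials in the kernel of a weighted elliptic operator, identify $\mathrm{ad}_X$ with a first-order operator commuting with it, diagonalize, and split the zero eigenspace into real and imaginary parts, the latter giving Killing fields. One concrete inaccuracy: the potential of $[X,Y]$ is $X\theta_Y - Y\theta_X$, and since $\theta_X$ is real (it is the Ricci potential up to a constant) one has $Y\theta_X = \bar X\theta_Y$, so $\mathrm{ad}_X$ acts on potentials as $\theta_Y \mapsto (X-\bar X)\theta_Y = 2i\,\Im(X)\theta_Y$, \emph{not} as $\theta_Y \mapsto X\theta_Y + \const$ (the term $\bar X\theta_Y = \Delta\theta_Y - (2n{+}2)\theta_Y$ is not constant on the kernel); compare \eqref{eq:bracket} and the identity $L-\bar L = 2i\,\Im(X)$ in Section \ref{sec:inftransf}. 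This correction actually simplifies your ``core step'': since $\Im(X)$ is Killing and preserves the weighted measure, $\mathrm{ad}_X$ is $i$ times a skew-adjoint derivation, so self-adjointness with respect to the weighted $L^2$ pairing, reality of the spectrum and semisimplicity are immediate (mere commutation with the self-adjoint $L$ would not by itself give semisimplicity); only the non-negativity $\lambda\geq 0$ genuinely needs the weighted Bochner identity and the soliton equation, as you describe. Your treatment of $\lie h_0 = \lie h_0'\oplus\lie h_0''$, the Killing identification (the analogue of Lemma \ref{lemmaz0}), and the maximality of $\Iso(M)^0$ via averaging plus the uniqueness theorem of \cite{tz_uniq} is the standard and correct route.
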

One of the results in this paper, namely Theorem \ref{thm:decomp}, is a partial extension to the Sasakian setting of the above theorem. It concerns the decomposition of a \emph{quotient} Lie algebra of transverse infinitesimal transformations of the Sasaki manifold. This is done in order to keep some consistency with the Sasaki-extremal case (see \cite{canonical, vancov}) and with some older decomposition results by Nishikawa and Tondeur \cite{NishTond} holding in the more general setting of transversely K\"ahler foliations with minimal leaves. 

Then we shall consider an application to Sasaki-Ricci solitons of the deformation theory of Sasakian structures. We will start with a given Sasaki-Ricci soliton and a connected group $G$ of Sasaki automorphism and deform the structure in a $G$-equivariant way through three standard types of deformations, namely type I, type II and the ones of the transverse complex structure introduced by Nozawa in \cite{nozawa}.

This has already been done in the Sasaki-extremal case by van Coevering in \cite{vancov} who proved, under some assumptions on the Futaki invariant, a stability result for extremal Sasakian metrics with vanishing reduced scalar curvature, reduced in a certain sense. His method of proof makes use of the implicit function theorem, whose assumptions are satisfied thanks to the assumption on the Futaki invariant.

Our result is the solitonic analogue of van Coevering's work. We will prove in Theorem \ref{thm:defoSRS} the following fact. Given a Sasaki-Ricci soliton together with a connected group $G$ of Sasaki automorphisms, we prove the existence, after $G$-equivariant deformations as above, of a wider class of Sasakian metrics known as  \emph{generalized} Sasaki-Ricci solitons (cf. Definition \ref{def:genSRS}).
This result is also a generalization of a result in the K\"ahler setting, obtained by Li in \cite{Li}. 

In the paper \cite{frankel} by He and Sun it is proved, among other things, a stability result for Sasaki-Ricci solitons under $\T$-equivariant type I and II deformations, for a fixed torus $\T$ of Sasaki transformations. Our work can be also thought as a generalization of that as we have relaxed the possible choices for the group, added more deformations and obtained a wider class of metrics as output.

The paper is organized as follows. We start by recalling some background in Sasakian geometry in Section \ref{sec:background}, then we define and state some properties of Hamiltonian holomorphic fields in Section \ref{sec:norm}. In Section \ref{sec:inftransf} we prove the decomposition theorem and finally in Section \ref{sec:deform} we state and prove the theorem about deformations.

\subsection*{Acknowledgements}The author would like to thank Song Sun and Craig~van Coevering for the discussions and clarifications and Fabio~Podest\`a for suggesting the problem, for his constant advice and support and also for his help in a better presentation of this paper.

\section{Background} \label{sec:background}
Here we recall the definitions and main facts about Sasakian structures and their deformations which will be used throughout the paper. We refer to the monograph \cite{monoBG} or more detailed references will be given as needed.
\subsection{Sasakian manifolds}
\begin{defi}
A Riemannian manifold $(M, g)$ is \emph{Sasakian} if its metric cone $C(M)=M \times \R^+$ with  metric $\bar g = r^2 g + dr^2$ is K\"ahler, with $r$ the coordinate on $\R^+=(0, +\infty)$. 
\end{defi}
$M$ has odd dimension $2n+1$ and is identified with $M \times \{ 1\}$.
We have an integrable complex structure $I \in \End(T C(M))$. We use the Euler field $r \partial_r$ on the cone to define the vector field $\xi = I(r \partial_r)$ which is tangent to $M$. Both $\xi$ and $r \partial_r$ are real holomorphic with respect to $I$ and $\xi$ is Killing with respect to both $g$ and $\bar g$. Moreover $\xi$ is unitary and with geodesic orbits on $M$.

Consider also the $1$-form $\eta = d^c \log r$ on $C(M)$. We call their restrictions to $M$ using the same notations, $\eta$ and $\xi$.

The symplectic form $\omega$ on the cone has the property that $\omega = \frac 1 2 d(r^2 \eta)$, hence $\eta$ on $M$ is a contact form and $\xi$ is its Reeb field, i.e. $\eta(\xi)=1$ and $\iota_\xi d\eta = 0$. Moreover it can be easily seen that they are the Riemannian dual of each other.

The contact form points out a non-integrable distribution $D = \ker \eta$. It gives a $g$-orthogonal splitting 
\[
TM = D \oplus L_\xi
\]
where $L_\xi$ is the trivial line bundle generated by $\xi$.
Restrict $I$ to $D$ and denote it by $J$. We then extend it to an endomorphism $\Phi \in \End(TM)$ by setting $\Phi \xi = 0$. From the fact that $I$ is a complex structure and that $\bar g$ is Hermitian it follows that
\begin{align}
\Phi^2 				& = -\id + \eta \otimes \xi  \label{eq:phi2} \\
g(\Phi \cdot, \Phi \cdot) 	&= g - \eta \otimes \eta. \label{eq:compmetric}
\end{align}

Equation \eqref{eq:phi2} says that the triple $(\eta, \xi, \Phi)$ is an \emph{almost contact structure} on $M$ and \eqref{eq:compmetric} says that the metric $g$ is a \emph{compatible metric} making $M$ a \emph{contact metric structure}. The fact that $\xi$ is Killing says that the structure is \emph{K-contact}. Moreover \eqref{eq:compmetric} also says that the restriction $g|_D$ is $J$-Hermitian and another property is that
\begin{equation}\label{deta}
\frac 1 2 d\eta = g(\Phi \cdot, \cdot),
\end{equation}
that is $\frac 1 2 d\eta|_D$ is the fundamental form of $g|_D$. Moreover $(D, J)$ is a pseudoconvex CR structure. 

We shall denote a Sasakian structure on $M$ by a tuple $\call S = (\eta, \xi, \Phi, g)$ of tensor fields as above.

The orbits of the vector field $\xi$ define on $M$ a foliation $\call F = \call F_\xi$ called the \emph{Reeb} or \emph{characteristic} foliation. If its leaves are compact  the Sasakian manifold is called \emph{quasi-regular}, otherwise \emph{irregular}. In the former case, if the induced circle action is free,  the manifold is called \emph{regular}.

The subbundle $D$ is identified via $g$ with the normal bundle $\nu \call F := TM/T\call F$ of the Reeb foliation. By the consideration above we see that $\nu(\call F)$ is endowed with an integrable complex structure $J = \Phi|_D$ and a symplectic form $\frac 1 2 d\eta$. We then define  the metric on $D$
\[
g^T= \frac 1 2 d\eta(\cdot, J \cdot)
\]
and we obtain a  \emph{transverse K\"ahler structure} on $M$. The transverse metric $g^T$ and $g$ are related by $g = g^T + \eta \otimes \eta$.

It can be proven that the connection
\[
\nablat_X Y = \begin{cases}	(\nabla_X Y)^D 	& \text{ if } X \in D \\
						[X,Y]^D			& \text{ if } X \in L
			\end{cases}
\]
is the unique torsion-free connection compatible with the metric $g^T$ on $D$. Out of this we define the transverse Riemann curvature tensor 
\[
R^T_{X,Y} Z = \nablat_X \nablat_Y Z - \nablat_Y \nabla_X Z - \nablat_{[X,Y]} Z.
\]
Let $\Ric^T$ and $s^T$ be the transverse Ricci tensor and the transverse scalar curvature defined by averaging from $R^T$.
With computations similar to the ones for Riemannian submersions (e.g. \cite{besse}) one can prove the following. 
\begin{prop}[\cite{monoBG}]
For a Sasakian metric $g$ on a manifold of dimension $2n+1$ we have the following.
\begin{enumerate}[(i)]
\item $\Ric_g(X, \xi) = 2n \eta(X)$ for $X \in \Gamma(TM)$;
\item $\Ric^T(X,Y) = \Ric_g(X,Y) + 2 g^T(X,Y)$ for $X, Y \in \Gamma(D)$;
\item $s^T = s_g + 2n$.
\end{enumerate}
\end{prop}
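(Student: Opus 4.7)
The plan is to reduce everything to two fundamental Sasakian identities, both consequences of the K\"ahler condition on $C(M)$: $\nabla_X \xi = \Phi X$ and $(\nabla_X \Phi)Y = \eta(Y) X - g(X, Y)\xi$. The first comes from combining the Killing condition on $\xi$ (which makes $\nabla \xi$ skew) with \eqref{deta}, since torsion-freeness gives $\frac{1}{2}d\eta(X, Y) = g(\nabla_X \xi, Y)$. The second descends from $\bar\nabla I = 0$ on $C(M)$ via the standard cone computation.

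For (i), I substitute $\nabla_Z \xi = \Phi Z$ into $R(X, Y)\xi = \nabla_X \nabla_Y \xi - \nabla_Y \nabla_X \xi - \nabla_{[X, Y]} \xi$; the $\Phi \nabla_\bullet \bullet$ terms collapse by torsion-freeness, leaving $R(X, Y)\xi = (\nabla_X \Phi) Y - (\nabla_Y \Phi) X = \eta(Y) X - \eta(X) Y$. Tracing against an orthonormal basis $\{e_i\}$ of $TM$ and using $\sum_i \eta(e_i) g(X, e_i) = \eta(X)$ yields $\Ric_g(X, \xi) = (2n + 1)\eta(X) - \eta(X) = 2n \eta(X)$.

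For (ii), I first make $\nablat$ explicit on horizontal fields. Differentiating $g(Y, \xi) = 0$ along horizontal $X$ gives $\eta(\nabla_X Y) = -g(\Phi X, Y)$, so $\nablat_X Y = \nabla_X Y + g(\Phi X, Y)\xi$. Substituting this into the definition of $R^T(X, Y) Z$, while tracking the vertical part of $[X, Y] = [X, Y]^D - 2g(\Phi X, Y)\xi$ (obtained from $\eta([X, Y]) = -d\eta(X, Y)$) and the identity $[\xi, Z]^D = \nabla_\xi Z - \Phi Z$, I obtain the O'Neill-type formula
\[
R^T(X, Y)Z = (R(X, Y)Z)^D + g(\Phi Y, Z)\Phi X - g(\Phi X, Z)\Phi Y - 2 g(\Phi X, Y)\Phi Z.
\]
Tracing over a horizontal orthonormal frame and using $g(\Phi V, \Phi W) = g(V, W)$ for horizontal $V, W$, together with $R(\xi, Y, Z, \xi) = g(Y, Z)$ (a consequence of the formula for $R(X, Y)\xi$ and curvature symmetries), yields (ii). Statement (iii) follows by tracing (ii) over the horizontal frame and substituting $\Ric_g(\xi, \xi) = 2n$ from (i) into $s_g = \Ric_g(\xi, \xi) + \sum_j \Ric_g(e_j, e_j)$.

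The main obstacle is the bookkeeping in deriving the O'Neill-type expansion in (ii). The non-integrability of $D$, encoded in $\Phi$, plays the role of the O'Neill A-tensor for the locally-defined submersion $M \to M/\call F$; each bracket and each horizontal projection contributes potentially off-diagonal terms that must be cancelled against one another, and a miscount of any vertical component produces the wrong coefficient of $g^T(Y, Z)$. This is exactly the computation flagged by the paper as analogous to the Riemannian submersion formulas in Besse, specialised here to totally geodesic one-dimensional fibres generated by the Reeb field $\xi$.
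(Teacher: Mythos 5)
Your proof is correct, and it is exactly the route the paper gestures at: the paper gives no argument of its own, citing \cite{monoBG} and remarking only that the statements follow from ``computations similar to the ones for Riemannian submersions,'' which is precisely the O'Neill-type expansion you carry out (with $A_XY=-g(\Phi X,Y)\xi$ playing the role of the A-tensor). The two pillars $\nabla_X\xi=\Phi X$ and $(\nabla_X\Phi)Y=\eta(Y)X-g(X,Y)\xi$, the resulting identity $R(X,Y)\xi=\eta(Y)X-\eta(X)Y$, and the traces all check out with the paper's sign conventions.
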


A differential $r$-form $\alpha$ is said to be \emph{basic} if
\[
\iota_\xi \alpha = 0, \qquad \iota_\xi d\alpha = 0.
\]
The space of global basic $r$-forms will be denoted by $\basicforms^r(M)$. In particular a function $f$ is basic if $\xi \cdot f = 0$. We denote by $\CinfB(M, \R)$ the space $\basicforms^0(M)$ of real valued basic functions.
The exterior derivative maps basic forms to basic forms so it makes sense to consider the subcomplex $(\basicforms^*(M), \dB)$ of the usual deRham complex, where $\dB$ is the restriction of the exterior derivative to basic forms. Its cohomology is called \emph{basic deRham cohomology} and is denoted by $\HB^*(M)$.

One can consider also the adjoint $\deltaB$ of $\dB$ by means of a transverse Hodge star operator and the corresponding \emph{basic Laplacian} $\laplB = \deltaB \dB + \dB \deltaB$. 

From the fact that the transverse geometry is K\"ahler  the \emph{basic $(p,q)$-forms} can be defined. As it is done in classical complex geometry, the transverse complex structure induces a splitting of the bundle of $r$-forms and  consequently a splitting 
\[
\Omega^r_{\textup B} = \bigoplus_{p+q=r} \Omega^{p,q}_{\textup B}.
\]
One can also construct the basic Dolbeault operators $\deB$ and $\debarB$ that will share the properties of the usual ones on complex manifolds, namely $\dB = \deB + \debarB$ and also $\deB^2 = \debarB^2 = \deB \debarB + \debarB \deB = 0$. Also, let $\dB^c = i(\debarB-\deB)$. The cohomology of the complex $(\basicforms^{*,*}(M), \debarB)$ is called \emph{basic Dolbeault cohomology} and denoted by $\HB^{*,*}(M)$.
See \cite[Chap.~7]{monoBG} for details.

A particular basic cohomology class in $\HB^{1,1}(M)$ is the \emph{first basic Chern class}, defined by the basic class $\cB(M):= [\frac 1 {2 \pi} \rho^T]_B$ where $\rho^T = \Ric^T(\Phi \cdot, \cdot)$ is the transverse Ricci form.

A Sasakian manifold is called \emph{transversally Fano} or \emph{positive} if this class is represented by a positive basic $(1,1)$-form.
We recall a well known result useful for later.
\begin{prop} [\cite{monoBG}] \label{c1D} The real first Chern class $c_1(D)$ of the vector bundle $D$ vanishes (is a torsion class) if, and only if, there exist $a \in \R$ such that $\cB(M) = [ad\eta]_B$.
\end{prop}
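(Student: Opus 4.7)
The plan is to relate the basic and de Rham cohomologies through the natural ``forget the basic condition'' map $\iota^*\colon \HB^*(M) \to H^*(M,\R)$ and to identify its kernel in degree two. The key observation is that the single form $\tfrac{1}{2\pi}\rho^T$ computes both sides of this map in degree two: using the Bott partial connection on $(D,J)$ along the Reeb leaves together with the transverse Levi-Civita connection, one obtains a $U(n)$-connection on the complex vector bundle $(D,J) \to M$ whose curvature is basic and proportional to $\rho^T$, so that $\iota^*(\cB(M)) = c_1(D)$. With this in hand, the $(\Leftarrow)$ direction is immediate: if $\cB(M) = [a\, d\eta]_B$ then $c_1(D) = a[d\eta] \in H^2(M,\R)$, which is zero since $d\eta$ is globally exact on $M$.

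For the $(\Rightarrow)$ direction, the decisive input is the identification
\[
\ker\bigl(\iota^*\colon \HB^2(M) \to H^2(M,\R)\bigr) = \R \cdot [d\eta]_B,
\]
a standard consequence of the Gysin-type long exact sequence for the Reeb foliation of a $K$-contact manifold (see \cite{monoBG}). More concretely, the map $\wedge[d\eta]_B \colon \HB^0(M) \to \HB^2(M)$ has image $\R\cdot [d\eta]_B$ (using $\HB^0(M) = \R$ by connectedness of $M$ and non-vanishing of $[d\eta]_B$ in $\HB^2(M)$), and by exactness this image coincides with $\ker\iota^*$ in degree two. If $c_1(D)$ is a torsion class then its image in real cohomology vanishes, so $\iota^*(\cB(M)) = 0$, whence $\cB(M) = a[d\eta]_B$ for some $a \in \R$.

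The main obstacle is the exact sequence above, which I would invoke from \cite{monoBG} rather than reprove; its justification goes through a spectral-sequence or equivariant Cartan-model argument for the Reeb foliation. A subtler point worth spelling out is the equality $\iota^*(\cB(M)) = c_1(D)$, which requires producing a $U(n)$-connection on $D$ whose curvature is basic of type $(1,1)$; once this is done, everything else reduces to chasing the representative $\rho^T$ through the commutative diagram relating basic forms, ordinary forms and their cohomology classes.
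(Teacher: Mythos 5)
The paper states this proposition without proof, citing \cite{monoBG}, and your argument is correct and essentially reproduces the standard proof given there: the long exact sequence $\HB^0(M)\xrightarrow{\wedge[d\eta]_B}\HB^2(M)\xrightarrow{\iota^*}H^2(M,\R)$ identifies $\ker\iota^*=\R\cdot[d\eta]_B$, and $\iota^*\cB(M)=c_1(D)$ because the transverse Chern form $\tfrac{1}{2\pi}\rho^T$ represents both classes, so the two directions follow from exactness of $d\eta$ and from the kernel computation respectively. The only loose phrasing is the claim that the curvature of the $U(n)$-connection on $D$ is ``proportional to $\rho^T$'': the curvature is $\End(D)$-valued, and it is its trace (the induced curvature on $\Lambda^n D^{1,0}$) that gives $\rho^T$.
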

\subsection{Transversely holomorphic fields and holomorphy potentials}
We have the group of the automorphisms of the foliation given by
\[
\Fol(M, \xi) = \{ \phi \in \Diffeo(M): \phi_* \call F_\xi \subseteq \call F_\xi \}
\]
and its Lie algebra 
\[
\fol(M, \xi) = \{ X \in \Gamma(TM): [\xi,X] \in \Gamma(L_\xi) \}
\]
also called the space of \emph{foliate vector fields}.

For any foliate vector field $X$ on $M$ we denote by $\bar X$ its projection onto the space of sections of the normal bundle $\nu (\call F_\xi)$.
The image of such projections has a Lie algebra structure defined by $[ \bar X, \bar Y] := \bar{[X,Y]}$. We call it the algebra of \emph{transverse vector fields}.

Since the differential of an automorphism of the foliation is an endomorphism of the normal bundle $\nu (\call F_\xi)$ which has a complex structure $J$, we can also define the group of \emph{transversally holomorphic} transformations by
\[
\Fol(M, \xi, J ) = \{ \phi \in \mathrm{Fol}(M, \xi): \phi_* J = J \phi_*\}
\]
with Lie algebra
 called the space of \emph{transversally holomorphic} vector fields.
This space can be expressed as 
\[
\fol(M, \xi, J) = \{ X \in \fol(M,\xi): \bar{[X, \Phi Y]} = J \bar{[X,Y]} \text{ for all }Y \in \Gamma(TM) \}
\]
as $\Phi Y$ is a representative of $J \bar Y$.

As in the classical case, the transverse complex structure gives a splitting $\nu(\call F_\xi) = \nu(\call F_\xi)^{1,0} \oplus \nu(\call F_\xi)^{0,1}$.
 
Given a complex valued basic function $u \in \CinfB(M, \C)$, we define $\desharp_g u$ to be the $(1,0)$-component of the gradient of $u$, i.e. the transverse field such that
\[
g(\desharp_g u, \cdot) = \debar u
\]
or simply $\desharp u$ if the metric is clear from the context.

The field $\desharp u$ need not to be transversally holomorphic. The space of basic function that give rise to transversally holomorphic fields is the kernel $\call H_g$ of the fourth order elliptic operator $L_g=(\debar \desharp_g)^*(\debar \desharp_g)$ which, as in the K\"ahler case, can be expressed as
\[
L_g u = \frac 1 4 \biggl (\laplB^2 u + (\rho^T, dd^c u) + 2 (\debarB u, \debarB s^T) \biggr ).
\]

\subsection{Deformations of Sasakian structures}

Let $\call S=(\eta, \xi, \Phi, g)$ be a Sasakian structure on $M$. We now want to keep the Reeb field $\xi$ fixed and let $\eta$ vary by perturbing it with a basic function. Namely, for $\phi \in \CinfB(M, \R)$ we let
\[
\tilde \eta = \eta + \dB^c \phi.
\]
Then we have $d\tilde \eta = d\eta + \dB \dB^c \phi$ and, for small $\phi$, the form $\tilde \eta$ is still contact.
We have the following.
\begin{prop}For a small basic function $\phi$ there exists a Sasakian structure on $M$ with same Reeb field $\xi$, same holomorphic structure on $C(M)$, same transverse holomorphic structure on $\call F_\xi$ and contact form $\tilde \eta = \eta + \dB^c \phi$.
\end{prop}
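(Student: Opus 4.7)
The plan is to construct the deformed structure $(\tilde\eta, \xi, \tilde\Phi, \tilde g)$ explicitly on $M$ and then verify the Sasakian axioms by producing the corresponding K\"ahler structure on the cone $C(M)$ with the \emph{same} complex structure $I$. The setup is driven by a single observation: since $\phi \in \CinfB(M, \R)$, both $\dB^c\phi$ and $\dB \dB^c\phi$ are basic, so $(\dB^c\phi)(\xi)=0$ and $\iota_\xi \dB \dB^c \phi = 0$. Immediately we get $\tilde\eta(\xi) = 1$ and $\iota_\xi d\tilde\eta = 0$, while the transverse $(1,1)$-form $\tilde \omega^T := \omega^T + \tfrac 1 2 \dB \dB^c \phi$ remains positive with respect to the fixed $J$ on $\nu \call F_\xi$ provided $\phi$ is $C^2$-small. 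This simultaneously shows that $\tilde \eta$ is a contact form and that the transverse K\"ahler form is merely perturbed, not its complex structure.

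Next I would define an almost contact endomorphism that descends to the same $J$ on $\nu \call F_\xi$ but is compatible with the new horizontal distribution $\tilde D := \ker \tilde \eta$, namely
\[
\tilde \Phi := \Phi - \xi \otimes ((\dB^c\phi) \circ \Phi), \qquad \tilde \Phi \xi = 0,
\]
together with the metric $\tilde g := \tfrac 1 2 d\tilde \eta(\cdot, \tilde \Phi \cdot) + \tilde \eta \otimes \tilde \eta$. Since the correction $\xi \otimes ((\dB^c\phi) \circ \Phi)$ is valued in $L_\xi$, the endomorphism $\tilde \Phi$ projects to the same $J$ on $\nu \call F_\xi$, so the transverse holomorphic structure on $\call F_\xi$ is preserved. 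A short calculation, using $(\dB^c \phi)(\xi)=0$ and the fact that $\tfrac 1 2 d\tilde \eta$ is a $J$-invariant $(1,1)$-form, yields $\tilde \Phi^2 = -\id + \tilde \eta \otimes \xi$ and the analogues of \eqref{eq:compmetric} and \eqref{deta}; moreover all ingredients are $\xi$-invariant, so $\xi$ is Killing and $(\tilde \eta, \xi, \tilde \Phi, \tilde g)$ is at least K-contact.

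The main step is to upgrade this K-contact structure to a Sasakian one with the prescribed cone and transverse holomorphic structure. My approach is to set, on the cone $C(M) = M \times \R^+$, the new radial coordinate $\tilde r := r e^\phi$ with $\phi$ pulled back from $M$. Since $\phi$ is basic, $d^c \phi$ computed with the ambient $I$ restricts to $\dB^c \phi$ on $M$, so $d^c \log \tilde r = d^c \log r + d^c \phi$ restricts to $\eta + \dB^c\phi = \tilde \eta$. For $\phi$ small enough $\tilde r^2$ remains strictly plurisubharmonic with respect to $I$, and hence $\tilde \Omega := \tfrac 1 2 dd^c(\tilde r^2)$ is an $I$-K\"ahler form on $C(M)$. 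The Sasakian structure induced on the slice $\{\tilde r = 1\}$, pulled back to $M$ by the radial diffeomorphism $p \mapsto (p, e^{-\phi(p)})$, matches $(\tilde \eta, \xi, \tilde \Phi, \tilde g)$.

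The delicate point, and the one I expect to take the most care, is this final matching: one must check that the radial rescaling preserves $\xi$ (it does because $\phi$ is basic, hence $r \partial_r$-invariant on $C(M)$) and that the contact form, endomorphism, and metric induced from the cone construction on the new slice agree with the formulas postulated above. Once this identification is in place, integrability of $I$ on the cone forces all Sasakian axioms and automatically preserves both the ambient complex structure on $C(M)$ and the transverse holomorphic structure on $\call F_\xi$.
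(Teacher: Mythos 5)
The paper states this proposition without proof, citing it as standard background from \cite{monoBG}; your argument --- replacing the radius function by $\tilde r = r e^\phi$ so that the K\"ahler potential on the cone is perturbed while the complex structure $I$ and the Euler field $r\partial_r = \tilde r \partial_{\tilde r}$ (hence $\xi = I(r\partial_r)$) are untouched, then reading off the induced structure on the slice $\{\tilde r = 1\}$ --- is exactly the standard proof of this fact, and your explicit formulas for $\tilde \Phi$ and $\tilde g$ agree with the ones the paper records immediately after the statement. The only point to watch is the normalization of constants (in the paper's conventions $\omega = \tfrac 1 2 d(r^2\eta) = \tfrac 1 4 dd^c(r^2)$, not $\tfrac 1 2 dd^c(r^2)$), which affects none of the structure of the argument.
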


So the deformation $\eta \mapsto \tilde \eta$ deforms the transverse K\"ahler form $d\eta$ in the same transverse K\"ahler class $[d\eta]_B$. The other tensors vary as follows
\begin{align*}
\tilde \Phi 	&= \Phi - \xi \otimes d^c \phi \circ \Phi \\
\tilde g	&= d\tilde \eta \circ (\tilde \Phi \circ \id) + \tilde \eta \otimes \tilde \eta.
\end{align*}
Transverse K\"ahler deformations are a special case of the so called \emph{type II} deformations, in the terminology of \cite{monoBG}.

A second type of deformation  keeps the CR structure fixed but deforms the Reeb foliation. So we define a tuple $\call S' = (\eta', \xi', \Phi', g')$ by
\begin{align} \label{defoI}
\tilde \eta	= f \eta	&,  &\tilde \xi	= \xi + \rho
\end{align}
that is by adding to $\xi$ an infinitesimal automorphism of the Sasakian structure $\rho \in \lie{aut}(\xi, \eta, \Phi, g) = \{ X \in \Gamma(TM): \lieder_X g = 0, \lieder_X \eta = 0, \lieder_X \Phi = 0, [X, \xi]=0 \}$.
The Reeb condition forces $f$ to be equal to $(1 + \eta(\rho))^{-1}$.

As long as $\eta(\tilde \xi) >0$ we still have a contact structure whose contact subbundle $D$ is unchanged as $\Phi|_D = \tilde \Phi|_D$. We extend $\tilde \Phi$ by $\tilde \Phi = \Phi - \Phi \tilde \xi \otimes \tilde \eta$. This will satisfy the compatibility condition in the definition of an almost contact structure. Finally define the Riemannian metric
\[
\tilde g (X,Y)= d \tilde \eta (\tilde \Phi X, Y) + \tilde \eta(X) \tilde \eta(Y)
\]
which is compatible by construction. 
Let us denote by $\lie F(D, J)$ the space of all K-contact structures having $(D, J)$ as underlying almost CR structure.
\begin{defi} A deformation as in \eqref{defoI} within $\lie F(D, J)$ is said to be \emph{of type I}.
\end{defi}

Let us now relate such space with the Lie algebra of infinitesimal CR automorphisms $\lie{cr}(D, J) = \{ X \in \lie{aut}(D) : \lieder_X J = 0 \}$.
Fix a pseudo convex CR structure $(D, J)$ and assume it is what is called \emph{of Sasaki type} i.e. there exists a K-contact structure that admits it as underlying CR structure. This means that the set $\lie F(D, J)$ is non empty and we fix a structure $\call S_0 = (\eta_0, \xi_0, \Phi_0, g_0)$ in it.

\begin{prop} A contact metric structure $\call S = (\eta, \xi, \Phi, g)$ lies in $\lie F(D, J)$ if and only if $\xi \in \lie{cr}(D, J)$.
\end{prop}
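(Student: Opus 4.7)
The plan is to reduce both conditions in the biconditional to the single statement $\lieder_\xi \Phi = 0$, and then invoke the classical identity of contact metric geometry relating this to the Killing property of the Reeb field.

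First I would observe that $\xi$ automatically preserves the subbundle $D$: by Cartan's formula
\[
\lieder_\xi \eta = \iota_\xi d\eta + d(\eta(\xi)) = 0 + d(1) = 0,
\]
so $\xi \in \lie{aut}(D)$ for any contact metric structure whose Reeb field is $\xi$. Hence $\xi \in \lie{cr}(D, J)$ reduces to $\lieder_\xi J = 0$. Since the underlying almost CR structure of $\call S$ is $(D, J)$ by hypothesis, $\Phi|_D = J$, and from $\Phi \xi = 0$ together with $[\xi, \xi] = 0$ one gets $(\lieder_\xi \Phi)(\xi) = 0$. Using the splitting $TM = D \oplus L_\xi$, the global vanishing of $\lieder_\xi \Phi$ on $TM$ is therefore equivalent to the vanishing of $\lieder_\xi J$ on $D$, that is, to $\xi \in \lie{cr}(D, J)$.

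The second and central ingredient is the standard identity of contact metric geometry. Setting $h := \frac 1 2 \lieder_\xi \Phi$, one checks that $h$ is $g$-symmetric, $h\xi = 0$, $h\Phi + \Phi h = 0$, and that the Levi-Civita connection satisfies
\[
\nabla_X \xi = -\Phi X - \Phi h X.
\]
Symmetrizing (using that $\nabla$ is torsion-free, that $\Phi$ is $g$-skew by \eqref{deta}, and that $\Phi h$ is $g$-symmetric thanks to the anticommutation) yields
\[
(\lieder_\xi g)(X, Y) = g(\nabla_X \xi, Y) + g(\nabla_Y \xi, X) = -2\, g(\Phi h X, Y).
\]
Since $\Phi|_D$ is invertible and $h$ vanishes on $L_\xi$, this expression vanishes identically if and only if $h = 0$. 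Thus $\call S$ is K-contact precisely when $\lieder_\xi \Phi = 0$.

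Combining the two reductions gives $\call S \in \lie F(D, J) \sse \xi \in \lie{cr}(D, J)$. The only non-formal step is producing the formula $\nabla_X \xi = -\Phi X - \Phi h X$, a classical computation starting from the compatibility relations \eqref{eq:phi2}--\eqref{deta} which I would quote from \cite{monoBG} rather than reprove. Everything else is formal bookkeeping on the decomposition $TM = D \oplus L_\xi$.
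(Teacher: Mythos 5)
Your proof is correct. The paper itself gives no argument for this proposition: it is recalled as background from the deformation literature (\cite{monoBG} and the work on canonical Sasakian metrics), so there is no in-paper proof to compare against. What you have written is essentially the standard contact-metric-geometry argument: the reduction of $\xi\in\lie{cr}(D,J)$ to $\lieder_\xi\Phi=0$ is correct (the $L_\xi$-component of $\lieder_\xi\Phi$ vanishes automatically and $\lieder_\xi\eta=0$ lets you restrict to $D$), and the equivalence of $\lieder_\xi\Phi=0$ with the Killing condition via Blair's tensor $h=\frac12\lieder_\xi\Phi$ and the identity $\nabla_X\xi=-\Phi X-\Phi hX$ is the classical Theorem of Blair that the cited references rely on. Two minor points: the exact sign in $\nabla_X\xi=-\Phi X-\Phi hX$ depends on whether one uses the paper's normalization $\frac12 d\eta=g(\Phi\cdot,\cdot)$ or Blair's, but this is harmless since the $\Phi X$ term cancels in the symmetrization by skewness and only the vanishing of $\Phi h$ matters; and your final step $\Phi h=0\Rightarrow h=0$ deserves the one-line justification you implicitly use, namely $\Phi^2 hX=-hX+\eta(hX)\xi=-hX$ because $\eta(hX)=g(h\xi,X)=0$.
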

We now identify the set $\lie F(D, J)$ with a cone of $\lie{cr}(D, J)$. Namely, define
\[
\lie{cr}^+(D, J) = \{ \xi \in \lie{cr}(D,J) : \eta_0(\xi)>0 \}.
\]
It is a convex cone in the Lie algebra $\lie{cr}(D, J)$ and moreover it is invariant by the adjoint action of the group of CR transformations. This helps to give the following description of  $\lie F(D, J)$.
\begin{prop}
The map $(\xi, \eta, \Phi, g) \mapsto \xi$ defines a bijection $\lie F(D, J) \simeq \lie{cr}^+(D, J)$.
\end{prop}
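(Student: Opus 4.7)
The plan is to exhibit an explicit inverse $\xi \mapsto (\eta, \xi, \Phi, g)$. First, I note that the preceding proposition already tells us that the assignment $\call S \mapsto \xi$ takes values in $\lie{cr}(D, J)$; that it lands in the positive cone is because any contact form $\eta$ with $\ker\eta = D$ is proportional to $\eta_0$, and the proportionality factor is forced to be positive by the requirement that $\frac12 d\eta(\cdot, J\cdot)|_D$ be positive definite. Consequently $\eta_0(\xi) > 0$. For injectivity, if two structures $(\eta_i, \xi, \Phi_i, g_i)$ share the same Reeb field $\xi$, then both $\eta_i$ have $\ker \eta_i = D$ and $\eta_i(\xi) = 1$, so $\eta_1 = \eta_2$; both $\Phi_i$ equal $J$ on $D$ and annihilate $\xi$, so $\Phi_1 = \Phi_2$; and the metrics then coincide via the compatibility identity $g_i = \frac12 d\eta_i(\Phi_i \cdot, \cdot) + \eta_i\otimes \eta_i$.

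For surjectivity, given $\xi \in \lie{cr}^+(D, J)$, I would set $\eta := \eta_0/\eta_0(\xi)$, which is well defined and satisfies $\ker \eta = D$ and $\eta(\xi) = 1$. To verify that $\xi$ is the Reeb field of $\eta$, I use that $\xi \in \lie{cr}(D, J)$ preserves $D$, hence $\lieder_\xi \eta = h \eta$ for some function $h$; pairing with $\xi$ yields $h = \xi \cdot \eta(\xi) = 0$, so $\lieder_\xi \eta = 0$ and therefore $\iota_\xi d\eta = \lieder_\xi \eta - d(\eta(\xi)) = 0$. I then extend $\Phi$ by $\Phi|_D = J$ and $\Phi \xi = 0$, and define $g := \frac12 d\eta(\Phi \cdot, \cdot) + \eta\otimes \eta$. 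Strict pseudoconvexity of $(D, J)$, combined with the fact that the rescaling $\eta_0 \mapsto \eta$ is by a strictly positive factor and therefore preserves the orientation class of the Levi form, guarantees that $\frac12 d\eta(\cdot, J\cdot)|_D$ is positive definite, so $g$ is a genuine Riemannian metric compatible with the almost contact structure $(\eta, \xi, \Phi)$.

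The remaining point, which I expect to be the main technical step, is the K-contact condition $\lieder_\xi g = 0$. This reduces to checking that each tensor in the formula for $g$ is $\xi$-invariant: $\lieder_\xi \eta = 0$ was established above, $\lieder_\xi \xi = 0$ is trivial, and $\lieder_\xi \Phi = 0$ follows from $\xi \in \lie{cr}(D, J)$ (which gives $\lieder_\xi J = 0$ on $D$) together with $\Phi \xi = 0$ and the preservation of the splitting $TM = D \oplus L_\xi$. This yields a K-contact structure in $\lie F(D, J)$ whose Reeb field is $\xi$, which is inverse to the original map by construction. The subtle ingredient is the interplay between the positivity condition $\eta_0(\xi) > 0$ defining the cone $\lie{cr}^+(D, J)$ and the positivity of the resulting compatible metric $g$: it is precisely this restriction that makes the bijection work.
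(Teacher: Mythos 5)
Your argument is correct. Note that the paper states this proposition as background without giving a proof (it is quoted from the literature on the Sasaki cone, cf.\ \cite{monoBG, vancov}), and your explicit inverse $\xi \mapsto (\eta_0/\eta_0(\xi), \xi, \Phi, g)$ is exactly the standard construction used there: the identification of all contact forms with kernel $D$ as positive rescalings of $\eta_0$, the Cartan-formula check that $\xi$ is Reeb for the rescaled form, and the observation that positivity of $\eta_0(\xi)$ is what keeps the Levi form, hence $g|_D$, positive definite. The only point you use silently is that $\tfrac 1 2 d\eta(\Phi\cdot,\cdot)$ is symmetric on $D$ (i.e.\ the Levi form is $J$-Hermitian), which holds because $(D,J)$ is an integrable CR structure; it is worth one line but is not a gap.
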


In particular we are interested in type I deformation giving rise to Sasakian structures that are invariant under the action of a fixed Lie group. So we will have $\alpha$ vary in the center $\lie z$ of the Lie algebra $\lie g$ of the group $G \subseteq \Aut(\eta, \xi, \Phi, g)$. The perturbed Reeb field $\xi + \alpha$ will belong to the cone
\[
\lie z^+ = \{ \zeta \in \lie z : \eta(\zeta) >0 \}
\]
called the \emph{Sasaki cone of $\lie z$}, in the terminology of \cite{vancov}.

Finally we define here a third type of deformations that change the transverse complex structure of the Reeb foliation keeping it fixed as a smooth foliation. For this we refer to \cite{nozawa, vancov}.

There exist a versal space that parameterizes such deformations, whose tangent space is $H^1_{\debarB} (\call A^{0,\bullet})$ that is the first cohomology of the complex $\debarB: \call A^{0, k} \rightarrow \call A^{0, k+1}$ where $\call A^{0,k}$ is the space of smooth basic forms of type $(0,k)$ with values in $\nu(\call F_\xi)^{1,0}$.

If one wants to consider $G$-invariant deformations, for some group $G$ acting on $M$, one can take the complex $(\debarB, \call A^{0,k}_G)$ of $G$-invariant $\nu(\call F_\xi)^{1,0}$-valued forms on $M$ and its first cohomology $H^1_{\debarB} (\call A^{0, \bullet})^G$ is the tangent space of the versal space of $G$-invariant deformations.
In the following let $\call B$ be a smooth subspace of the versal space of $G$-invariant deformations.

There is a known obstruction due to Nozawa \cite{nozawa} for the existence of Sasakian structures compatible with a given deformation of a transverse K\"ahler foliation.
\begin{defi}[\cite{nozawa}]
A deformation $(\call F_\xi, J_t)_{t \in \call B}$ is said to be of $(1,1)$-type if for all $t \in \call B$ the $(0,2)$-component of $d\eta$ vanishes.
\end{defi}

A result of Nozawa states that this is the only obstruction for the existence of compatible Sasakian metrics.

\begin{thm}[\cite{nozawa}]
Let $(\call F_\xi, J_t)_{t \in \call B}$ be a deformation of the Sasakian structure $( \eta, \xi, \Phi, g)$. Then there exist a neighborhood $V$ of $0$ in $\call B$ such that for $t \in V$ there exist a smooth family of compatible Sasakian structures $(\eta_t, \xi, \Phi_t, g_t)$ such that  $(\eta_0, \xi, \Phi_0, g_0) =  (\eta, \xi, \Phi, g)$ if and only if the deformation restricted to $V$ is of $(1,1)$-type.
\end{thm}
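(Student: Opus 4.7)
The plan is to establish the biconditional, with the reverse implication $(\Leftarrow)$ carrying essentially all of the content. The forward implication $(\Rightarrow)$ is a matter of unpacking the K\"ahler condition on the transverse geometry, while the reverse is a genuine existence statement, achieved by constructing a compatible Sasakian family explicitly and using openness of positivity.

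\textbf{Reverse direction.} The $(1,1)$-type hypothesis says that $d\eta$ is of pure type $(1,1)$ with respect to every $J_t$. Positivity of the symmetric bilinear form $\tfrac{1}{2} d\eta(\cdot, J_t \cdot)|_D$ holds at $t=0$ and is an open condition on $J_t$, so it persists for $t$ in a neighborhood $V$ of $0$ in $\call B$. On $V$ I set
\[
\eta_t := \eta, \qquad \Phi_t|_D := J_t, \qquad \Phi_t \xi := 0, \qquad g_t := \tfrac{1}{2} d\eta(\Phi_t \cdot , \cdot) + \eta \otimes \eta.
\]
The identity $\Phi_t^2 = -\id + \eta \otimes \xi$ follows from $J_t^2 = -\id|_D$, and metric compatibility reduces to $d\eta$ being $J_t$-invariant of type $(1,1)$. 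For the K-contact property $\lieder_\xi g_t = 0$ one uses $\lieder_\xi \eta = 0$, $\lieder_\xi d\eta = 0$ and $\lieder_\xi \Phi_t = 0$; the last equality holds because a transverse complex structure on $\call F_\xi$ is by definition invariant under the flow of $\xi$. Finally, integrability of $J_t$ as a transverse complex structure amounts to CR integrability of $(D, J_t)$, so K-contact plus integrable CR yields a Sasakian structure by the standard characterization. Smooth dependence on $t$ is manifest from the explicit formulas.

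\textbf{Forward direction.} Suppose a smooth family of compatible Sasakian structures $(\eta_t, \xi, \Phi_t, g_t)$ with $\Phi_t|_D = J_t$ exists. Then $\tfrac{1}{2} d\eta_t$ is the transverse K\"ahler form, hence of type $(1,1)$ w.r.t.\ $J_t$. The difference $\alpha_t := \eta_t - \eta$ is a basic $1$-form with $\alpha_0 = 0$. Invoking a transverse $\partial\bar\partial$-lemma applied to $J_t$, together with the fact that the basic K\"ahler class $[d\eta_t]_B$ extends smoothly from $[d\eta]_B$, one may arrange via a type II gauge adjustment that $d\alpha_t$ is of pure type $(1,1)$ w.r.t.\ $J_t$; whence $d\eta = d\eta_t - d\alpha_t$ is also of type $(1,1)$, giving $(d\eta)^{(0,2)}_{J_t} = 0$.

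\textbf{Main obstacle.} The delicate analytic input I foresee is in the forward direction: justifying the transverse $\partial\bar\partial$-lemma smoothly along the deformation, i.e.\ showing that the gauge freedom in the choice of $\eta_t$ can be exploited to kill the non-$(1,1)$ parts of $d\alpha_t$ with respect to the varying $J_t$. This is a Kodaira--Spencer-type point and appears to be the main technical hurdle; the reverse direction, by contrast, is essentially algebraic and explicit once positivity has been verified to persist in $V$.
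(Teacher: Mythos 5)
The paper itself gives no proof of this statement; it is quoted from Nozawa, so your attempt has to be measured against the proof in the cited source. The gap lies in how you read the $(1,1)$-type condition. In Nozawa's theorem --- and in the only reading under which Corollary \ref{cor:11type} of this paper makes sense --- the vanishing of the $(0,2)$-component refers to the basic cohomology class $[d\eta]_B$ decomposed according to $J_t$, not to the form $d\eta$ pointwise. Indeed that corollary asserts that positivity forces $(1,1)$-type because $[d\eta]_B$ is a positive multiple of $2\pi\cB$, which is represented by a Ricci form and is therefore a $(1,1)$-class for every $J_t$; no such argument controls the pointwise type of the fixed form $d\eta$. With the cohomological reading your reverse direction collapses: you set $\eta_t=\eta$ and rely on $d\eta$ being pointwise of type $(1,1)$ for $J_t$, which is exactly what is not given. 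The real content of the theorem is to produce a genuinely new $\eta_t=\eta+\alpha_t$ with $\alpha_t$ basic, chosen to kill the $(2,0)+(0,2)$ part of $d\eta$ with respect to $J_t$; this requires solving $\debarB$-equations whose solvability is precisely the cohomological hypothesis, establishing smooth dependence on $t$ via transverse Hodge theory for the family of basic Laplacians (a Kodaira--Spencer-type argument), and then checking that $\eta_t$ is still a contact form with Reeb field $\xi$ and that its $(1,1)$-part is positive for small $t$. That elliptic construction is the theorem; your version proves only the much weaker statement that a \emph{pointwise} $(1,1)$ deformation admits a compatible family with the unchanged contact form.

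Your forward direction has an independent defect even under your own pointwise reading: it is circular. A ``type II gauge adjustment'' modifies $\eta_t$, never $\eta$, and $d\alpha_t=d\eta_t-d\eta$ is of type $(1,1)$ with respect to $J_t$ precisely when $d\eta$ is --- which is the conclusion you are trying to reach. Under the pointwise reading the forward implication is in fact false in general. Under the cohomological reading it is immediate and needs none of this machinery: $\eta_t-\eta$ annihilates $\xi$ and is $\xi$-invariant, hence basic, so $[d\eta]_B=[d\eta_t]_B$, and the latter class is represented by the $J_t$-$(1,1)$ form $d\eta_t$, so its $(0,2)$-component in basic cohomology vanishes. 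I will add that your reverse-direction bookkeeping --- symmetry of $d\eta(\cdot,J_t\cdot)$ from $J_t$-invariance, openness of positivity, the fact that K-contact plus CR-integrability yields Sasakian, and the identity $\eta([Z,W])=-d\eta(Z,W)$ tying CR-integrability of $(D,J_t)$ to the pointwise $(1,1)$ condition --- is sound and is exactly the final step of the genuine proof, once the correct $\eta_t$ has been constructed; it is the construction of $\eta_t$ that is missing.
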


The following corollary will be useful to us.
\begin{corol} \label{cor:11type}
A deformation of a positive Sasakian structure is of $(1,1)$-type. In particular when the original Sasakian metric is Einstein or more generally when its transverse K\"ahler form belongs to $2 \pi \cB$.
\end{corol}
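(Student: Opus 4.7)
The plan is to reduce the $(1,1)$-type condition to a cohomological statement about the class $[d\eta]_B$ and then exploit the topological invariance of the basic first Chern class under deformations of the transverse complex structure.

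By Proposition \ref{c1D}, a positive Sasakian structure has $c_1(D)$ torsion, and therefore admits a positive real constant $a$ with $\cB(M) = [a\,d\eta]_B$. The two highlighted cases make this proportionality explicit: when $[\omega^T]_B$ is a positive real multiple of $\cB$ (in particular $[\omega^T]_B = 2\pi \cB$) it is the hypothesis itself, while Sasaki--Einstein metrics are a further special case via the identity $\rho^T = (2n+2)\omega^T$. Hence it suffices to show that $\cB$ remains of basic Hodge type $(1,1)$ with respect to every deformed $J_t$.

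Because the Nozawa deformations $(\call F_\xi, J_t)$ keep the smooth foliation, and hence the smooth normal bundle $\nu(\call F_\xi)$, fixed, the basic first Chern class $\cB(J_t)$ is the first Chern class of a continuous family of complex vector bundles on a compact manifold and is therefore independent of $t$ as a class in $\HB^2(M)$. On the other hand, Chern--Weil applied to a Chern connection on $(\nu(\call F_\xi), J_t)$ exhibits $\cB(J_t)$ as a class in $\HB^{1,1}(J_t)$. Combining these, $[d\eta]_B = a^{-1}\cB$ lies in $\HB^{1,1}(J_t)$ for every $t \in \call B$.

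The remaining passage from this class-level statement to the pointwise vanishing of the $(0,2)$-component of $d\eta$ with respect to $J_t$ proceeds through the basic $\dB \dB^c$-lemma, available on any transversely K\"ahler foliation: one finds a smooth family $\phi_t \in \CinfB(M, \R)$ with $\phi_0 = 0$ such that $d\eta + \dB \dB^c \phi_t$ is of pure type $(1,1)$ with respect to $J_t$, yielding, through the type II deformation $\eta \mapsto \eta + \dB^c \phi_t$, the compatible Sasakian family predicted by Nozawa's theorem. The main technical point is the smooth dependence on $t$ of the potentials $\phi_t$, which follows from standard elliptic regularity for a fourth-order operator built from $\laplB$ and the projection onto $\HB^{0,2}(J_t)$.
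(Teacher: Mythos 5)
The paper states this corollary without its own proof (it is imported from Nozawa's work), so I am comparing your argument with the standard one. Your opening step is where the argument breaks: Proposition \ref{c1D} is an equivalence between ``$c_1(D)$ is torsion'' and ``$\cB = [a\,d\eta]_B$ for some $a$''; positivity of the Sasakian structure implies neither. Positivity only says that $2\pi\cB$ contains \emph{some} positive basic $(1,1)$-form, not that $[d\eta]_B$ is proportional to $\cB$. A regular positive Sasakian structure fibering over a Fano manifold with $b_2>1$, polarized by a K\"ahler class not proportional to $c_1$ (e.g.\ the circle bundle over $\CP^1\times\CP^1$ with Euler class of bidegree $(1,2)$), is a counterexample to your reduction. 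Consequently your Chern--Weil argument, which controls the type of $\cB(J_t)$, says nothing about the type of $[d\eta]_B$ in the general positive case; it only covers the two situations listed after ``in particular'', where proportionality is part of the hypothesis. Even there, constancy of the \emph{basic} Chern class in $\HB^2(M)$ is not a purely topological fact: the map $\HB^2(M)\to H^2(M,\R)$ has kernel $\R\,[d\eta]_B$, so one must argue separately (say, by continuity of the Chern--Weil representatives of a continuous family of basic connections, together with $c(0)=0$) that the class does not drift along that kernel direction.

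The argument that actually proves the full statement goes through the vanishing of the receiving group rather than the type of the class: the obstruction is a class in $H^{0,2}_{\debarB}(M,J_t)$; positivity gives $H^{0,2}_{\debarB}(M,J_0)=0$ by the transverse analogue of the Fano vanishing $H^{0,q}=0$, $q>0$ (transverse Kodaira vanishing via El Kacimi's transverse Hodge theory); and upper semicontinuity of $\dim H^{0,2}_{\debarB}(M,J_t)$ in $t$ kills the whole group, hence the obstruction, for all small $t$. Your closing paragraph also conflates hypothesis and conclusion: the $(1,1)$-type condition is a condition on (the class of) $d\eta$ alone and is the input to Nozawa's theorem, whereas constructing the potentials $\phi_t$ and the compatible family $\eta_t$ is the \emph{content} of that theorem; moreover the $\dB\dB^c$-lemma for $J_t$ that you invoke presupposes a transverse K\"ahler metric compatible with $J_t$, which is precisely what is not yet available at that stage.
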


We will consider $G$-equivariant deformations of type I and II applied to a deformation $(\eta_t, \xi, \Phi_t, g_t)$ of $(1,1)$-type as above. Thus we consider the following Sasakian structure

\begin{align}
\eta_\paramt 	&=  (1+\eta_t(\alpha))^{-1} \eta_t + d^c \phi \nonumber \\
\xi_\paramt	&= \xi + \alpha  \label{alldefo} \\ 
\Phi_\paramt	&= \Phi_{t, \alpha} - (\xi + \alpha) \otimes d^c \phi \circ \Phi_{t, \alpha} \nonumber
\end{align}
and $g_\paramt$ given by the compatibility relations, for $(t, \alpha, \phi) \in \call B \times \lie z \times H^k(M)^G$, where $H^k(M)^G$ is the space of $G$-invariant $L^2$ functions whose derivatives up to order $k$ are $L^2$. We assume $k>n+5$ in order to apply the Sobolev imbedding theorem and have the curvatures of $g_\paramt$ well defined.

\section{Normalized Hamiltonian holomorphic vector fields} \label{sec:norm}
We make the same assumption as in \cite{FOW}. Namely we start with a positive compact Sasakian manifold $(M^{2n+1}, g, \eta, \xi, \Phi)$. If we assume $c_1(D)=0$ and normalize, we have $2 \pi \cB=(2n+2)[\frac 1 2 d\eta]_B$ by Proposition \ref{c1D}.

Let $h$ be a Ricci potential, that is a real basic function such that $\rho^T - (2n+2) \frac 1 2 d\eta = i \deB \debarB h$ and consider the operator $\Delta^h$ acting on basic functions as
\[
\Delta^h u = \Delta_{\debarB}u - (\debar u, \debar h).
\]
Here we have dropped the $B$ subscript and we will do the same in the following as it will be clear from the context.
\begin{oss} \label{rmk-lapl}
This is the $\debar$-Laplacian on functions, with respect to the weighted product $\langle f,g\rangle_h = \int_M f \bar g e^h \mu$.
Indeed
\begin{align*}
\langle \debar^* \debar u, v \rangle_h 	&= \langle \debar u, \debar v \rangle_h \\
							&= \int_M \de_{\bar a} u \bar{\de_{\bar b} v} g^{\bar a  b} e^h \mu \\
							&= -  \int_M \biggl ( \nabla_b \nabla^b u \cdot \bar v + \nabla^b u \nabla_b h \cdot \bar v \biggr) e^h \mu \\
							&= \int_M \Delta^h u \cdot \bar v e^h \mu
\end{align*}
with volume form $\mu = (\frac 1 2 d\eta)^n \wedge \eta$.
\end{oss}
We now consider a class of vector fields introduced in \cite{CFO-transvalg, FOW}.
\begin{defi} \label{def:hamholo}
A complex vector field $X$ on $M$ commuting with $\xi$ is called \emph{Hamiltonian holomorphic} if its projection onto the normal bundle $\bar X$ is transversally holomorphic and the basic function, sometimes called \emph{potential}, $u = i \eta(X)$ is such that 
\[
\iota_X \omega^T = i \debarB u.
\]
It is \emph{normalized} if $\int_M u e^h \mu = 0$.
\end{defi}

It must then have the form 
\[
X = -iu \xi + \desharp u = -iu \xi + \nabla^j u e_j,
\]
where $e_j = \dez{z^i} - \eta_i \xi$ generate $D^{1,0} \simeq \nu(\call F_\xi)^{1,0}$.

We recall a widely known fact.
\begin{lemma} \label{lemma:halg} The subset $\lie h = \{X: \text{is Hamiltonian holomorphic} \}$ is a Lie subalgebra of the algebra of the algebra of vector fields on $M$.
\end{lemma}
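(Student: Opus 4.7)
The plan is to check closure of $\lie h$ under complex linear combinations, which is immediate from the explicit formula $X = -iu\xi + \desharp u$ and the $\mathbb{C}$-linearity of $u \mapsto \desharp u$, and under the Lie bracket, which is the substantive part. For the bracket, given $X, Y \in \lie h$ with potentials $u = i\eta(X)$ and $v = i\eta(Y)$, I would verify the three defining conditions of Definition~\ref{def:hamholo} for $Z = [X, Y]$ in turn.

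The first two conditions are essentially formal. From $[X, \xi] = [Y, \xi] = 0$, the Jacobi identity forces $[Z, \xi] = 0$. The projection to the normal bundle is $\bar Z = [\bar X, \bar Y]$, and since $\fol(M, \xi, J)$ is a Lie algebra (the bracket of two foliate fields whose flows preserve $J$ preserves $J$ as well), $\bar Z$ is transversally holomorphic.

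For the potential, the natural guess is $w = X(v) - Y(u)$. Since $u$ and $v$ are basic, the $\xi$-components of $X$ and $Y$ annihilate them, so that $X(v) = \desharp u(v)$ and $Y(u) = \desharp v(u)$; basicness of $w$ then follows from $[\xi, X] = [\xi, Y] = 0$. The identity $w = i\eta([X, Y])$ comes from $\eta([X, Y]) = X(\eta(Y)) - Y(\eta(X)) - d\eta(X, Y)$ once one observes that $d\eta(X, Y) = 2\omega^T(X, Y) = 0$: the form $\omega^T$ is of transverse type $(1,1)$, while both $\bar X$ and $\bar Y$ lie in $D^{1,0}$. Finally, to verify $\iota_Z \omega^T = i \debarB w$ I would apply Cartan's magic formula
\[
\iota_{[X, Y]} \omega^T = \lieder_X \iota_Y \omega^T - \iota_Y \lieder_X \omega^T.
\]
The first summand equals $i \lieder_X \debarB v = i \debarB(X v)$ because transverse holomorphicity of $\bar X$ lets $\lieder_X$ commute with $\debarB$ on basic forms. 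For the second, $d\omega^T = 0$ gives $\lieder_X \omega^T = d\iota_X \omega^T = i \deB \debarB u$, and a further application of Cartan together with $\iota_Y \debarB u = 0$ (which holds since $\bar Y$ is of transverse type $(1,0)$ and $\iota_\xi$ annihilates basic forms) reduces $\iota_Y \lieder_X \omega^T$ to $i \debarB(Y u)$. Combining the two terms yields $\iota_Z \omega^T = i \debarB(X v - Y u) = i \debarB w$.

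The main technical point is this last identity, and more precisely the fact that $\lieder_X$ commutes with $\debarB$ on basic forms. This will reduce to $\bar X$ being transversally holomorphic, so that $\lieder_X$ preserves the basic $(p,q)$ decomposition; in a local transverse holomorphic frame the condition amounts to the components of $\desharp u$ being holomorphic functions of the transverse coordinates.
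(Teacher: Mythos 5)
Your proposal is correct and follows essentially the same route as the paper: the potential of $[X,Y]$ is identified as $w = Xv - Yu$ (using that $d\eta$ has transverse type $(1,1)$ and hence vanishes on two $(1,0)$ fields), and the condition $\iota_{[X,Y]}\omega^T = i\debarB w$ is verified via Cartan's formula $\iota_{[X,Y]}\omega^T = \lieder_X\iota_Y\omega^T - \iota_Y\lieder_X\omega^T$. Your write-up is somewhat more explicit than the paper's about the auxiliary verifications ($[Z,\xi]=0$, transverse holomorphy of $\bar Z$, and the commutation of $\lieder_X$ with $\debarB$), but the substance is identical.
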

\begin{proof}
Let $X, Y$ be Hamiltonian holomorphic with potentials $u, v$.

Their bracket is
\begin{equation} \label{eq:bracket}
[X, Y] = -i (Xv - Yu) \xi + [\desharp u, \desharp v]
\end{equation}
using the fact that $u,v$ are basic, the $e_j$'s commute among each other and with $\xi$ and that $d\eta$ has basic type $(1,1)$ so it must vanish when evaluated on two $(1,0)$ fields.
 If we let $w := Xv - Yu$ it is (dropping the B's and the T's for simplicity)
\begin{align*}
\iota_{[X,Y]} \omega &= \lieder_X \iota_Y \omega - \iota_Y \lieder_X \omega \\
				&= \lieder_X (i \debar v) - \iota_Y (i \de \debar u) \\
				&= \iota_X (i \de \debar u) - \iota_Y (i \de \debar u)\\
				&= i \debar (Xv - Yu) \\
				&= i \debar w.
\end{align*}
So $[X,Y]$ is Hamiltonian holomorphic with potential $w$.
\end{proof}

Let  $\Lambda_1$ be the  first eigenspace of $\Delta^h$ with eigenvalue $\lambda_1$.
\begin{thm}[\cite{FOW}] \label{teocorresp}
We have
\begin{enumerate}
\item  \label{firsteig} $\lambda_1 \geq 2m+2$.
\item \label{equal} Equality holds if and only if there exist a nonzero Hamiltonian normalized holomorphic vector field.
\item \label{corresp} The correspondence $\Lambda_1 \rightarrow \lie h_N$ given by  
\[
u \mapsto -iu \xi + \desharp u
\]
where $\lie h_N$ denotes the space of normalized Hamiltonian holomorphic fields, is an isomorphism.
\end{enumerate}
\end{thm}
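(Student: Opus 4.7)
The proof hinges on establishing a weighted transverse Bochner--Weitzenb\"ock identity: for every basic complex function $u$,
\begin{equation} \label{eq:bochnerFOW}
\int_M |\debarB \desharp u|^2 e^h \mu = \int_M |\Delta^h u|^2 e^h \mu - (2n+2) \int_M |\debar u|^2 e^h \mu.
\end{equation}
My plan is to derive \eqref{eq:bochnerFOW} by the same commutator-of-covariant-derivatives calculation used in the K\"ahler setting, but performed in the transverse Hermitian bundle $(\nu(\call F_\xi)^{1,0}, g^T)$. Integrating by parts against the weighted volume $e^h \mu$ and using the identity $\rho^T - (2n+2) \omega^T = i \deB \debarB h$, the transverse Ricci term combines with the Hessian-of-$h$ contribution produced by differentiating the weight to yield a single factor $(2n+2) g^T$, which is precisely the $-(2n+2)$ coefficient in \eqref{eq:bochnerFOW}. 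The fact that $u$ is basic ensures that all the pieces live in the transverse complex, and the $\xi$-directions contribute nothing to the derivation.

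Granting \eqref{eq:bochnerFOW}, parts \ref{firsteig} and \ref{equal} are immediate. For an eigenfunction $\Delta^h u = \lambda u$ with $\lambda > 0$, Remark \ref{rmk-lapl} gives $\int_M |\debar u|^2 e^h \mu = \lambda \int_M |u|^2 e^h \mu$ and $\int_M |\Delta^h u|^2 e^h \mu = \lambda^2 \int_M |u|^2 e^h \mu$, so \eqref{eq:bochnerFOW} reads
\[
0 \leq \int_M |\debarB \desharp u|^2 e^h \mu = \lambda(\lambda - 2n - 2) \int_M |u|^2 e^h \mu,
\]
forcing $\lambda \geq 2n + 2$, with equality iff $\debarB \desharp u = 0$, i.e.\ iff $\desharp u$ is transversally holomorphic. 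By Definition \ref{def:hamholo} and Lemma \ref{lemma:halg} this is precisely the assertion that $X = -iu\xi + \desharp u$ is Hamiltonian holomorphic.

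For part \ref{corresp}, the map $u \mapsto -iu\xi + \desharp u$ sends $\Lambda_1$ into $\lie h_N$ (elements of $\Lambda_1$ are $\langle \cdot, \cdot \rangle_h$-orthogonal to the constant eigenspace, hence automatically normalized) and is injective since applying $\eta$ recovers $-iu$. For surjectivity, take $X \in \lie h_N$ with potential $u = i\eta(X)$. Then $\bar X = \desharp u$ is transversally holomorphic, so $\debarB \desharp u = 0$. Expanding $u = \sum_\mu c_\mu u_\mu$ in an $L^2(e^h \mu)$-orthonormal basis of eigenfunctions of $\Delta^h$ and applying \eqref{eq:bochnerFOW} yields
\[
0 = \sum_\mu |c_\mu|^2 \mu(\mu - 2n - 2).
\]
By part \ref{firsteig}, every nonzero $\mu$ satisfies $\mu \geq 2n + 2$, so each summand is non-negative, forcing $c_\mu = 0$ whenever $\mu > 2n + 2$; the normalization condition kills the $\mu = 0$ term, so $u$ is supported on the $\mu = 2n + 2$ eigenspace, i.e.\ $u \in \Lambda_1$. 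The main technical obstacle is a clean derivation of \eqref{eq:bochnerFOW} in the basic transverse setting: one needs to verify that the foliate nature of the vector fields involved makes the computation strictly parallel to the K\"ahler case, and that the basic Hodge-theoretic formalism on $\basicforms^{*,*}(M)$ behaves as in its smooth complex analogue.
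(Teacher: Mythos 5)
Your proposal is correct and follows essentially the same route as the paper: both the eigenvalue bound and the characterization of equality rest on the weighted transverse Lichnerowicz--Bochner identity imported from Futaki's book, which the paper likewise invokes without re-deriving in the basic setting. The only minor divergence is in the surjectivity step, where you expand the potential in $\Delta^h$-eigenfunctions and use nonnegativity of each term $\mu(\mu-2n-2)$, while the paper shows pointwise that $\Delta^h u - (2n+2)u$ is transversally antiholomorphic, hence a constant killed by the normalization; the two arguments are interchangeable.
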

\begin{proof}
\begin{enumerate}
\item We can replicate the computation made in Futaki's book \cite{FutakiBook} to conclude that
\[
(\lambda - (2m+2)) \| \debar u \|^2_h = \| D_gu \|^2_h \geq 0
\]
for every $u$ in the eigenspace of eigenvalue $\lambda$ and the norms are taken using the weighted $L^2$ product and $D_g: \CinfB(M)^\C \rightarrow \Gamma(\nu(\call F_\xi)^{1,0} \otimes \basicforms^{0,1}(M))$ is the operator such that $\ker D_g = \call H_g$.

\item It means that the map in item \ref{corresp} is surjective.
Let $X$ be a Hamiltonian holomorphic vector field with potential function $u$.
Then the $g^T$-dual of the $(1,0)$-part of $X$ is a $\debar$-closed form $\alpha$ such that $\alpha = \debar u$, which is the same as $\iota_X \omega^T = i \debar u$.
This acts as a function $u$ in the Hodge decomposition $\alpha = \alpha_H + \debar u$ would in the K\"ahler setting.
The same computation in Futaki's book shows that
\[
\nabla_{\bar \jmath}(-\Delta^h u + (2m+2)u) = 0
\]
which means that the function $\Delta^h u - (2m+2)u$ equals some constant $c$.
Integrating this equality it we get
\[
\int_M \Delta^h u \cdot e^h \mu - (2m+2) \int_M u e^h \mu = c \vol_h(M)
\]
which implies $c=0$ if we start with a normalized vector field.
\item Being eigenspaces finite dimensional, we have also injectivity, hence isomorphism. \qedhere
\end{enumerate}
\end{proof}
We can use this correspondence to prove the following.
\begin{prop}The subspace $\lie h_N$ is a Lie subalgebra of $\lie h$.
\end{prop}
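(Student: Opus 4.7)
The plan is to verify closure under the Lie bracket, as the linear subspace condition is immediate from linearity of integration. Let $X, Y \in \lie h_N$ with normalized potentials $u, v$; by Theorem~\ref{teocorresp} these lie in $\Lambda_1$. From Lemma~\ref{lemma:halg} and formula~\eqref{eq:bracket}, the bracket $[X, Y]$ is Hamiltonian holomorphic with potential $w = Xv - Yu$, so I will be reduced to proving $\int_M w \, e^h \mu = 0$.

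My key step will be to establish the identity
\[
\int_M X(v) \, e^h \mu \;=\; \int_M v \, \Delta^h u \, e^h \mu,
\]
valid for any Hamiltonian holomorphic $X = -iu\xi + \desharp u$ and any basic function $v$. Since $v$ is basic and $[X, \xi] = 0$, only the transverse $(1,0)$-part contributes: $X(v) = \desharp u(v) = g^{j \bar k} \partial_{\bar k} u \, \partial_j v$. I will then recognize this as the integrand of the Hermitian pairing in Remark~\ref{rmk-lapl} with $\bar v$ substituted for $v$, obtaining
\[
\int_M \desharp u(v)\, e^h \mu \;=\; \langle \debar u, \debar \bar v\rangle_h \;=\; \langle \Delta^h u, \bar v\rangle_h \;=\; \int_M \Delta^h u \cdot v \cdot e^h \mu.
\]
The care required here is to check that the Reeb piece $-iu\xi$ of $X$ contributes nothing, which will follow from $\xi(v) = \xi(h) = 0$ together with $\dive \xi = 0$; this bookkeeping is where I expect the main (but routine) technical obstacle to lie.

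Finally, since $u, v \in \Lambda_1$, I may use $\Delta^h u = (2n+2) u$ and $\Delta^h v = (2n+2) v$, so applying the identity above symmetrically gives
\[
\int_M w \, e^h \mu \;=\; \int_M X(v) \, e^h \mu - \int_M Y(u)\, e^h \mu \;=\; (2n+2) \int_M (uv - vu) \, e^h \mu \;=\; 0.
\]
Hence $w$ is normalized and $[X, Y] \in \lie h_N$, completing the proof. The whole argument thus hinges on the adjointness identity displayed above; once secured, closure follows from the eigenvalue equation and the trivial symmetry of the product $uv$.
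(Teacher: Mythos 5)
Your argument is correct and is essentially the paper's own proof: both identify the potential of $[X,Y]$ as $w = Xv - Yu$ via Lemma \ref{lemma:halg}, rewrite $\int_M Xv\, e^h\mu$ as a weighted pairing $\langle \debar u, \debar\bar v\rangle_h$, invoke the self-adjointness of $\Delta^h$ from Remark \ref{rmk-lapl}, and conclude from the common eigenvalue $2n+2$ and the symmetry of $uv$. The only cosmetic difference is that your handling of the Reeb part is even simpler than you anticipate: $\xi(v)=0$ pointwise since $v$ is basic, so no divergence argument is needed.
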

\begin{proof}Let $X,Y \in \lie h_N$ be the image of functions $u,v$ via the correspondence. Then from the proof of Lemma \ref{lemma:halg} we have that the potential of $[X,Y]$ is $w = Xv - Yu$. Its integral is
\begin{align*}
\int_M w e^h \mu 	&= \int_M  (\debar v, \debar \bar u) e^h \mu - \int_M (\debar u, \debar \bar v) e^h \mu \\
				&= \int_M \Delta^h v \cdot u e^h \mu - \int_M \Delta^h u \cdot v e^h \mu \\
				&= 0
\end{align*}
where in the last equality we use that $u$ and $v$ are eigenfunctions of $\Delta^h$ with the same eigenvalue $2n+2$ and in the penultimate the self-adjointness of $\Delta^h$ (see e.g. \cite[Eq.~(33)]{FOW}).
\end{proof}

\section{A Lie algebra of infinitesimal transformations and its decomposition} \label{sec:inftransf}
Let there exist a Sasaki-Ricci soliton (SRS for short) as in \cite{FOW}, i.e. a Sasakian metric such that
\begin{equation}\label{eq:SRS_forms}
\rho^T - (2n+2) \tfrac 1 2 d\eta = \lieder_X \tfrac 1 2 d\eta
\end{equation}
 with Hamiltonian holomorphic normalized vector field $X$ and potential $\theta_X$ which, by an easy computation (e.g. \cite{FOW}), is equal to the Ricci potential $h$ up to a constant. The field $X$ can be written as
\begin{equation}
X = -i \theta_X \xi + \desharp h
\end{equation}
with $\int_M \theta_X e^h \mu = 0$.
Let the section of $D^{1,0}$ given by $\desharp h = \desharp \theta_X$ decompose as $\tilde X - i J \tilde X$, where $J$ is the transverse complex structure.

Consider the following operators $L$ and $\bar L$ acting on basic functions.
\begin{align*}
Lu		&= \Delta u - (\debar u, \debar h) - (2n+2) u = \Delta u - \bar X \cdot u - (2n+2)u\\
\bar L u	&= \Delta u - (\debar h, \debar \bar u) - (2n+2)u = \Delta u - X \cdot u - (2n+2)u.
\end{align*}
\begin{lemma}
The operators $L$ and $\bar L$ have the following properties.
\begin{enumerate}
\item $\bar{ L \bar u} = \bar L u$;
\item Each of them is self-adjoint with respect to the weighted $L^2$-product on the space of basic functions.
\item  $L$ and $ \bar L$ commute, so $\bar L$ maps $\ker L$ into itself.
\end{enumerate}
\end{lemma}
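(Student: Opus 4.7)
For part (1), the plan is to argue directly from the definitions: since $h$ is real and the pairing on basic $(0,1)$-forms is Hermitian (conjugate-linear in the second slot, as visible from $\langle \debar u, \debar v\rangle_h = \int g^{\bar a b}\de_{\bar a} u\, \overline{\de_{\bar b} v}\, e^h \mu$ in Remark~\ref{rmk-lapl}), one has $\overline{(\debar \bar u, \debar h)} = (\debar h, \debar \bar u)$; combined with $\overline{\Delta \bar u} = \Delta u$ (the basic $\debar$-Laplacian being a real operator), conjugating the definition of $L \bar u$ produces $\bar L u$. For part (2), rewrite $L = \Delta^h - (2n+2)\id$; Remark~\ref{rmk-lapl} shows $\langle \Delta^h u, v\rangle_h = \langle \debar u, \debar v\rangle_h$ is Hermitian symmetric in $u, v$, so $L$ is self-adjoint. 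The self-adjointness of $\bar L$ then follows from (1) by conjugating inside the weighted integral:
\[
\langle \bar L u, v\rangle_h
= \overline{\langle L \bar u, \bar v\rangle_h}
= \overline{\langle \bar u, L \bar v\rangle_h}
= \langle u, \overline{L \bar v}\rangle_h
= \langle u, \bar L v\rangle_h.
\]

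The core of the lemma is (3). My plan is to identify $L - \bar L$ as a first-order operator on basic functions and then show this operator commutes with $L$. A direct computation gives $(L - \bar L)u = -(\debar u, \debar h) + (\debar h, \debar \bar u) = (\desharp h - \bar{\desharp h})\cdot u$ on basic $u$; using the decomposition $\desharp h = \tilde X - iJ\tilde X$ fixed just before the lemma, this simplifies to
\[
(L - \bar L)|_{\basicforms^0(M)} = -2i\, J\tilde X.
\]
Hence $[L, \bar L] = 2i\, [L, J\tilde X]$, and it suffices to show that the real transverse vector field $J\tilde X$ commutes with $L$ on basic functions. Splitting $L = \Delta - \bar X - (2n+2)\id$ and noting that on basic functions $\bar X$ acts as $\tilde X + iJ\tilde X$, the commutator reduces to $[J\tilde X, \Delta] - [J\tilde X, \tilde X]$.

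The first bracket vanishes because $J\tilde X$ is Killing for $g^T$: since $\desharp h$ is transversally holomorphic (Hamiltonian holomorphicity of $X$) with real potential $h$, $\tilde X$ equals half the transverse real gradient of $h$, and $J\tilde X$ is thus the associated transverse Hamiltonian vector field, Killing by the standard K\"ahler correspondence. For the second bracket I would compute, at a point in transverse holomorphic normal coordinates,
\[
[\desharp h, \bar{\desharp h}] = g^{a\bar b} g^{\bar c d}\bigl(h_{\bar b}\, h_{ad}\, \de_{\bar c} - h_d\, h_{\bar b \bar c}\, \de_a\bigr) \qquad (h_{ab} := \de_a \de_b h, \text{ etc.}),
\]
together with the identity $[\tilde X, J\tilde X] = \tfrac{1}{2i}[\desharp h, \bar{\desharp h}]$. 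Now $h_{\bar b \bar c} = 0$ because $\desharp h$ is transversally holomorphic, and $h_{ad} = \overline{h_{\bar a \bar d}} = 0$ because $h$ is real; hence $[\desharp h, \bar{\desharp h}] = 0$ and so $[L, \bar L] = 0$. The preservation $\bar L(\ker L) \subseteq \ker L$ is then immediate. The main obstacle I foresee is precisely this last cancellation, where both defining features of a Sasaki--Ricci soliton---transverse holomorphicity of $X$ and reality of $h$---must be used at separate places, and neither hypothesis alone suffices.
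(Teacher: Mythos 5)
Your proof is correct and follows essentially the same route as the paper: identify $L-\bar L$ with the first-order operator $-2iJ\tilde X$ on basic functions, then kill the commutator using that $J\tilde X$ is transversally Killing (hence commutes with $\Delta$) and that $[J\tilde X,\tilde X]=0$. The only cosmetic difference is that where the paper disposes of $[J\tilde X,\tilde X]$ in one line by invoking that $\tilde X$ is transversally real holomorphic (so $[\tilde X, J\tilde X]=J[\tilde X,\tilde X]=0$), you verify the equivalent statement $[\desharp h,\bar{\desharp h}]=0$ by an explicit coordinate computation using $h_{\bar b\bar c}=0$ and the reality of $h$.
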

\begin{proof}
The first item is just a computation using that the pairing $(,)$ is Hermitian and that the Laplacian is a real operator.

For the second, notice that $L + (2n+2) \id = \Delta^h$ is self-adjoint because it is the $\debar$-Laplacian of the weighted metric as shown in Remark \ref{rmk-lapl}.

For the commutativity, it is enough to show $[L-\bar L, \bar L]=0$. We have
\[
(L-\bar L)u = (X-\bar X) u = 2i \Im(X) u = -2i J\tilde Xu.
\]
This operator commutes with $\bar L$ if and only if $J \tilde X$ commutes with the Laplacian (it is a general fact that Killing fields commute with Laplacians) and that $[J \tilde X, X ]=0$ being $\tilde X$ transversally real holomorphic.
\end{proof}

Let $E_\lambda$ be the eigenspace of $\bar L|_{\ker L}$ of eigenvalue $-\lambda$.
If $u \in E_\lambda \subset \ker L$ then it lies in the $(2n+2)$-eigenspace of $\Delta^h$ so $u$ defines a normalized Hamiltonian vector field $Y = -iu \xi + \desharp u$ by the correspondence in Theorem \ref{teocorresp}.

Now we compute the adjoint action of $X$ on $\lie h_N$.
\begin{prop}
For $Y$ in the image of $E_\lambda$ it is $[X, Y] = \lambda Y$.
\end{prop}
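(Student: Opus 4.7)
The plan is to compute the potential of $[X,Y]$ via the bracket formula established in Lemma \ref{lemma:halg}, and to show it equals $\lambda u$; by the linearity of the correspondence in Theorem \ref{teocorresp}, this will identify $[X,Y]$ with $\lambda Y$.

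First, I would note that $u \in E_\lambda \subseteq \ker L$, and since $L$ agrees with $\Delta^h - (2n+2)\id$ on basic functions, $u$ is an eigenfunction of $\Delta^h$ with eigenvalue $2n+2$. By Theorem \ref{teocorresp} it therefore defines a normalized Hamiltonian holomorphic field $Y = -iu\xi + \desharp u$. Since both $X$ and $Y$ are Hamiltonian holomorphic, Lemma \ref{lemma:halg} (see formula \eqref{eq:bracket}) gives that $[X,Y]$ is Hamiltonian holomorphic with potential
\[
w = X \cdot u - Y \cdot \theta_X.
\]

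Next I would exploit the eigenvalue condition. From $Lu = 0$ and $\bar L u = -\lambda u$ one gets $(L - \bar L)u = \lambda u$, and since the preceding lemma identifies $L - \bar L$ with the operator $X - \bar X$ on basic functions, this becomes $X\cdot u - \bar X \cdot u = \lambda u$.

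The remaining step, which I expect to be the only real obstacle though ultimately a direct computation, is the identification $\bar X \cdot u = Y \cdot \theta_X$. Recalling that $\theta_X = h$ up to an additive constant, so that $\bar X = ih\xi + \bar{\desharp h}$ and $Y = -iu\xi + \desharp u$, the Reeb parts annihilate the basic functions $u$ and $h$ respectively, while the transverse parts $\bar{\desharp h}\cdot u$ and $(\desharp u)\cdot h$ both reduce, through the defining relation $g(\desharp u,\cdot) = \debarB u$ and the Hermitian symmetry $g^{j\bar k} = g^{\bar k j}$, to the same expression $g^{j \bar k} h_j u_{\bar k}$ in local transverse holomorphic coordinates. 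Combining the two steps yields $w = X\cdot u - \bar X \cdot u = \lambda u$, which is precisely the potential of $\lambda Y$, and hence $[X,Y] = \lambda Y$.
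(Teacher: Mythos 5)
Your proof is correct, and it takes a shortcut that the paper does not. The paper computes the two components of $[X,Y] = -i(Xu - Yh)\xi + [\desharp h, \desharp u]$ separately: the $\xi$-component via $Xu - Yh = (X-\bar X)u = (L-\bar L)u = \lambda u$, exactly as you do, and the transverse component via an explicit calculation (using the decomposition $\desharp h = \tilde X - iJ\tilde X$ and the fact that $J\tilde X$ is Killing) showing $[\desharp h, \desharp u] = -\desharp(\bar L u) = \lambda\desharp u$. You bypass the second computation entirely by invoking Lemma \ref{lemma:halg}: since $[X,Y]$ is Hamiltonian holomorphic with potential $w$, it is forced to equal $-iw\xi + \desharp w$, so determining $w = \lambda u$ already determines the whole field. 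This is a legitimate and cleaner argument — your identification $Yh = \desharp u \cdot h = g^{j\bar k}h_j u_{\bar k} = \bar X u$ matches the paper's, and no normalization of $w$ is needed for the final identification. The only thing you lose is the explicit identity $[\desharp h, \desharp u] = -\desharp(\bar L u)$, which the paper records as equation \eqref{eq:eigenv} and reuses later in the proof of Theorem \ref{thm:decomp} to characterize the centralizer of $\bar X$; with your approach that characterization would instead follow from the potential computation ($w=0$ iff $(L-\bar L)u=0$ iff $\bar L u=0$ on $\ker L$), so nothing essential is lost.
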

\begin{proof}
The action of $X$ is given by \eqref{eq:bracket}, namely
\begin{equation}
[X, Y] = -i (Xu-Yh) \xi + [\desharp h, \desharp u].
\end{equation}

Consider the two summands separately. Compute, for $u \in \ker L$,
\begin{align*}
\desharp (\bar L u) 	&= 2i \desharp (J \tilde X \cdot u) \\
				&= 2i [J \tilde X, \desharp u]\\ 
				&= 2i J [ \tilde X, \desharp u]
\end{align*}
where the second equality is due to the fact that $\grad(Kf) = [K, \grad f]$ for any Riemannian manifold, Killing vector field $K$ and function $f$ on it.
So we obtain, if $u \in E_\lambda$,
\begin{align}\label{eq:eigenv}
[\desharp h, \desharp u]		&= [\tilde X, \desharp u] -i[J \tilde X, \desharp u] \nonumber \\ 
						&= -\frac 1 {2i} J \desharp (\bar L u) - \frac 1 2 \desharp (\bar L u)  \nonumber \\
						&= -\desharp( \bar L u)\\ 
						&= \lambda \desharp u. \nonumber
\end{align}

Now note that $Yh = \desharp u \cdot h = \nabla^i u \nabla_i h = \bar X u$. So
\[
-i (Xu - Yh) = -i (X - \bar X) u = -i (L - \bar L) u =- i \lambda u.
\]
Hence $[X, Y] =  \lambda Y$. 
\end{proof}
Consider now the zero eigenspace $E_0 = \ker L \cap \ker \bar L$ of $\bar L|_{\ker L}$. Mimicking Tian and Zhu's argument \cite{tz_uniq} we get that for $u \in \ker L \cap \ker \bar L$ it is $L(\Re u) = L(\Im u) = 0$, so $E_0$ splits as $E_0' \oplus E_0''$, the space of real valued and purely imaginary functions in $\ker L \cap \ker \bar L$.
This corresponds to a splitting of the image of $E_0$ as $\lie h_0 = \lie k_0 \oplus \lie k_0'$. We have

From now on, if $\lie p \subseteq \lie h$ is a Lie subalgebra containing $\xi$, we let $\bar{\lie p}$ denote the quotient $\lie p / \xi$.
The following lemma is basically \cite[Lemma~2.11]{vancov}.
\begin{lemma} \label{lemmaz0}The space $\bar{\lie k_0}$ is formed by the fields whose real part is transversally Killing.
\end{lemma}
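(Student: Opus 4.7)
The strategy is to compute the real part of a normalized representative of $\bar Y \in \bar{\lie h_0}$ explicitly and to read off when it is transversally Killing.

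By the isomorphism of Theorem \ref{teocorresp}, a class $\bar Y \in \bar{\lie h_0}$ has a unique normalized representative of the form $Y = -iu\xi + \desharp u$ with $u \in E_0 = E_0' \oplus E_0''$. Writing $u = a + ib$ with real basic $a, b$ (so $a \in E_0'$ and $ib \in E_0''$), and using the real decomposition $\desharp f = \tfrac{1}{2}(\nablat f - i J \nablat f)$ valid for any real basic $f$, a direct computation yields
\[
\Re Y = b\xi + \tfrac{1}{2}\nablat a + \tfrac{1}{2} J \nablat b .
\]
Since $b\xi$ is a section of $L_\xi$, the projection of $\Re Y$ to $\nu(\call F_\xi) \simeq D$ is represented by $\tfrac{1}{2}\nablat a + \tfrac{1}{2} J \nablat b$.

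I would then invoke the standard equivalence from transverse K\"ahler geometry: for a real basic function $f$, the $(1,0)$--field $\desharp f$ is transversally holomorphic if and only if $J\nablat f$ is transversally Killing, both conditions being equivalent to the vanishing of the $(0,2)$--part of the transverse complex Hessian of $f$. By the Tian--Zhu type observation recorded just before the statement, both $a$ and $b$ lie in $\ker L$; by Theorem \ref{teocorresp} the $(1,0)$--gradients $\desharp a$ and $\desharp b$ are therefore transversally holomorphic, so in particular $J\nablat b$ is automatically transversally Killing. The transverse gradient $\nablat a$, on the other hand, is transversally Killing if and only if its transverse Hessian vanishes (a gradient field is Killing iff its covariant derivative is both symmetric and antisymmetric), which on the compact manifold $M$ forces $a$ to be constant.

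Putting these together, the projection of $\Re Y$ is transversally Killing if and only if $a$ is constant; for a normalized representative this constant is forced to be zero, hence $u \in E_0''$, which through the correspondence of Theorem \ref{teocorresp} is exactly the condition $\bar Y \in \bar{\lie k_0}$. The step I expect to be the main obstacle is the careful bookkeeping of the real/imaginary decomposition through the $(1,0)$--gradient formula $Y = -iu\xi + \desharp u$; once $\Re Y$ is correctly expressed, the transverse K\"ahler identities and the standard compactness argument for Killing gradient fields close the argument routinely.
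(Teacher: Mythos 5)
Your proof is correct. The paper itself gives no argument for this lemma --- it simply defers to van Coevering's Lemma~2.11 --- and your computation ($\Re Y$ projecting to $\tfrac12\nablat a+\tfrac12 J\nablat b$, with $J\nablat b$ automatically transversally Killing by holomorphy of $\desharp b$ and $\nablat a$ Killing only if $a$ is constant, hence zero by normalization) is exactly the standard argument behind that citation, and it mirrors the proof the paper does write out for the ``imaginary version'' lemma in Section~4.
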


Our goal now is to write a decomposition of some algebra of transversally holomorphic vector fields, analogously to the case of extremal Sasakian metrics.
A natural Lie algebra to consider would be $\lie{fol}(M, \xi,  J)$. This is infinite dimensional as it contains the space of sections of the foliation distribution. So its projection onto the normal space of the foliation has been considered in \cite{canonical, vancov}.
On the other hand, in analogy of the K\"ahler-Ricci soliton case, we are interested only in \emph{Hamiltonian} fields which in particular are transversally holomorphic, that is $\lie h \subset \lie{fol}(M, \xi,  J)$. More precisely, we consider the projection $\bar{\lie h}$ onto the normal space, which also is finite dimensional.

As it is said in \cite{FOW}, given a Hamiltonian holomorphic vector field, one can obtain a normalized one by adding a constant multiple of $\xi$, so the space of normalized fields is a set of representatives for the classes of $\bar{\lie h}$.

We have already computed  the action of $X$ on normalized vector fields, so we can get also  the adjoint action of its class  $\bar X \in \bar{\lie h}$.
Recall that the Lie algebra of infinitesimal Sasaki transformations is defined by
\[
\lie{aut}(\mathcal S) = \{ X \in \Gamma(TM) : \lieder_X g = 0, \lieder_X \eta = 0 \}.
\]
Of course $\xi$ is central in it, so it makes sense to consider the quotient $\lie{aut}(\call S)/\xi$. Finally, let $\lie{aut}^T = \{ \bar Y : \lieder_{\bar Y} J = 0, \lieder_{\bar Y} g^T = 0 \}$. We have the following result.

\begin{thm}\label{thm:decomp}On a compact Sasaki-Ricci soliton $\call S$, the finite dimensional Lie algebra $\bar{\lie h}$ admits the decomposition
\[
\bar{\lie h} =   \bar{\lie k_0} \oplus J \bar{\lie k_0} \oplus \bigoplus_{\lambda>0} \bar{\lie h}_\lambda
\]
where  $\bar{\lie k_0}$ is the space in Lemma \ref{lemmaz0} and $\bar{\lie h}_\lambda = \{ \bar Y \in \bar{\lie h}: [\bar X, \bar Y] = \lambda \bar Y \}$.

Moreover $\bar{\lie k_0} \oplus J \bar{\lie k_0}$ is the centralizer in $\bar{\lie h}$ of $\bar X$ and the space $ \bar{\lie k_0}$ can be identified with $\lie{aut}(\mathcal S)/\xi$ and with $\lie{aut}^T$.
\end{thm}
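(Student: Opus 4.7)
The plan is to translate the spectral decomposition of $\ker L$ under $\bar L$ into the desired decomposition of $\bar{\lie h}$ via the isomorphism $u \mapsto Y_u = -iu\xi + \desharp u$ of Theorem \ref{teocorresp}, and then treat the zero-eigenspace and the algebraic identifications separately. Since $L$ is elliptic its kernel $\ker L = \Lambda_1$ is finite-dimensional, and by the preceding lemma $\bar L$ is self-adjoint with respect to $\langle \cdot, \cdot \rangle_h$ and commutes with $L$, so it restricts to a Hermitian operator on $\ker L$ and yields
\[
\ker L = \bigoplus_\lambda E_\lambda,
\]
with $E_\lambda$ the $(-\lambda)$-eigenspace. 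The key analytic step, which I expect to be the main obstacle, is to show that all eigenvalues satisfy $\lambda \geq 0$; I would prove this by a Bochner-type integration by parts mimicking Tian--Zhu, using essentially the soliton identity \eqref{eq:SRS_forms}.

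Given the spectral decomposition, the Proposition immediately preceding the theorem shows that $u \in E_\lambda$ implies $[X, Y_u] = \lambda Y_u$, and hence $[\bar X, \bar Y_u] = \lambda \bar Y_u$; conversely any $\bar Y \in \bar{\lie h}_\lambda$ is represented by a unique normalized potential $u \in \ker L$ forced to lie in $E_\lambda$ by the same computation, giving $\bar{\lie h} = \bigoplus_{\lambda \geq 0} \bar{\lie h}_\lambda$. For $\lambda = 0$ the splitting $E_0 = E_0' \oplus E_0''$ already set up above descends under the correspondence to $\bar{\lie h}_0 = \bar{\lie k_0} \oplus \bar{\lie k_0'}$. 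The equality $\bar{\lie k_0'} = J \bar{\lie k_0}$ follows from the observation that multiplying the potential $u$ by $i$ multiplies $\desharp u$ by $i$, and under $\nu(\call F_\xi)^{1,0} \simeq D$ multiplication by $i$ on a $(1,0)$-field corresponds to the action of $J$ on its real representative. The centralizer statement is then immediate: $\bar{\lie h}_0 = \ker \mathrm{ad}_{\bar X}$ by construction.

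It remains to establish the identifications of $\bar{\lie k_0}$, regarded as a real vector space. Every $\bar Y \in \bar{\lie k_0}$ has transversally Killing real part by Lemma \ref{lemmaz0}, so the real-part map $\bar Y \mapsto \Re \bar Y$ sends $\bar{\lie k_0}$ into $\lie{aut}^T$; it is injective because a transversally holomorphic class is determined by its real part via $\bar Y = \Re \bar Y - i J \Re \bar Y$, and surjective because any transversally Killing, transversally holomorphic real field complexifies to a Hamiltonian $(1,0)$-field with real potential, hence with class in $\bar{\lie k_0}$. For the identification with $\lie{aut}(\call S)/\xi$, I would exhibit the natural projection $\lie{aut}(\call S) \to \bar{\lie h}$, $Z \mapsto \bar Z^{1,0}$, as having image exactly $\bar{\lie k_0}$ (since Sasaki infinitesimal automorphisms are transversally Killing and transversally holomorphic) and kernel $\R \xi$, with surjectivity obtained by lifting $\bar Y \in \bar{\lie k_0}$ to a Sasaki infinitesimal automorphism via $\Re \bar Y \in D$ corrected by the multiple of $\xi$ prescribed by the Hamiltonian potential. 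The other steps are essentially algebraic once the positivity of the eigenvalues and the correspondence with $\ker L$ are in hand.
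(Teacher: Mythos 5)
Your overall route coincides with the paper's: diagonalize $\bar L$ on $\ker L$, transport the eigenspaces through the correspondence $u \mapsto -iu\xi + \desharp u$ of Theorem \ref{teocorresp} using the proposition $[X,Y]=\lambda Y$ for $u \in E_\lambda$, identify the centralizer with the image of $E_0 = E_0'\oplus E_0''$, and use the real-part map for the identifications of $\bar{\lie k_0}$. There are, however, two points to address.

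The genuine gap is in your surjectivity argument for $\bar{\lie k_0}\to\lie{aut}^T$. You assert that any transversally Killing, transversally holomorphic real field ``complexifies to a Hamiltonian $(1,0)$-field'', but for $V\in\lie{aut}^T$ the basic $1$-form $\iota_V\omega^T$ is only closed, not automatically $\dB$-exact, so a potential need not exist; the obstruction lives in $\HB^1(M)$. This is exactly where the positivity of a Sasaki--Ricci soliton must enter: the paper routes this through the exact sequence $0\to\{\xi\}\to\lie g'\to\lie{aut}^T\to \HB^1(M)$ together with the vanishing of $\HB^1(M)$ in the transversally Fano case. Without invoking that vanishing your real-part map need not be onto, and the identification with $\lie{aut}(\call S)/\xi$ inherits the same issue.

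Secondly, you correctly single out the non-negativity of the eigenvalues of $-\bar L|_{\ker L}$ (needed for the sum to run over $\lambda>0$ only) as the main analytic step and propose a Tian--Zhu type integration by parts, but you do not execute it; to be fair, the paper's own proof is equally silent here, simply asserting that the decomposition ``follows from the adjoint action computed above''. Your linear-algebra derivation of the converse inclusion $\bar{\lie h}_\lambda \subseteq \mathrm{image}(E_\lambda)$ is fine and substitutes for the paper's explicit argument in the $\lambda=0$ case (where $[\desharp h,\desharp u]=0$ forces $\bar L u$ to be constant and integration against $e^h\mu$ kills the constant).
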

\begin{oss}
In contrast with the similar decomposition in the case of extremal Sasakian metrics, here we do not have any summand corresponding to transversally parallel fields. 
In fact, SRS are Fano and therefore there are no basic harmonic $1$-forms, hence no transversally parallel fields.
\end{oss}

\begin{proof}[Proof of the theorem]
The eigenspace decomposition follows from the  adjoint action of $X$ on $\lie h_N$ computed above. 

For the last statement, on one hand it is clear that a function in $E_0$ induces a field that commutes with $X$ in $\lie h_N$ so its class belongs to the centralizer of $\bar X$.
On the other, a class $\bar Y$ induced by a normalized function $u \in \ker L$ centralizes $X$ if and only if $[\desharp h, \desharp u] = 0$. 
From \eqref{eq:eigenv} we see that in this case we need to have 
\[
\bar L u = \bar{L \bar u}= \Delta^h \bar u - (2n+2) u = \const.
\]
Integrating it with the weighted measure we see that the constant has to be zero, so $u \in E_0$.

Let us now prove the statement about the Lie algebra of infinitesimal Sasaki transformations. 
In \cite[Lemma 2.11]{vancov} it is proved that for a purely imaginary basic function $f$ the field $V= \Re \desharp f$ lifts to a vector field $\tilde V \in \lie{aut}(\call S)$ and conversely a vector field $\tilde V \in \lie{aut}(\call S)$ is such that its projection is the real part of $\desharp f$ for the purely imaginary function $f=i \eta(\tilde V)$.
 The $\desharp$-image of purely imaginary functions, followed by the projection onto the normal bundle is exactly $\bar{\lie k_0}$ of Lemma \ref{lemmaz0}.
 
 There is a well known exact sequence, see e.g. \cite{monoBG},
 \[
 0 \rightarrow \{ \xi \} \rightarrow \lie g' \rightarrow \lie{aut}^T \rightarrow H_B^1(M) \simeq H^1(M, \R).
 \]
 that means that the first (basic) cohomology group is an obstruction to the identification $\lie{aut}(\call S)/\xi \simeq \lie{aut}^T$ and in the transversal Fano case there is no such obstruction.
  \end{proof} 
\begin{oss}
In order to be consistent with analogue results in the literature for the Sasaki extremal case \cite{canonical, vancov} or more generally transversely K\"ahler harmonic foliations as in \cite{NishTond} we have stated Theorem \ref{thm:decomp} for the quotient algebra $\lie h/\xi$.
The computation of the adjoint action together with Lemma \ref{lemmaz0} prove that a similar decomposition holds for the finite dimensional Lie algebra $\lie h_N$ as well although the Lie algebra of infinitesimal Sasaki transformations cannot fit in the picture since it is not contained in $\lie h_N$.
\end{oss}
\section{Deformations of Sasaki-Ricci solitons} \label{sec:deform}
\subsection{Generalized Sasaki-Ricci solitons} 
Here we extend to the Sasakian setting the result obtained for K\"ahler-Ricci solitons by Li in \cite{Li}.

There is a wider class of metric that includes Sasaki-Ricci solitons. In the following let $\theta_X$ be the potential (up to constant) of a Hamiltonian holomorphic vector field and let $\laplB$ denote the $\dB$-Laplacian acting on basic functions.
\begin{defi} \label{def:genSRS}
A \emph{generalized Sasaki-Ricci soliton} (generalized SRS for short)  on compact $M^{2n+1}$ is a Sasakian metric whose transverse scalar curvature satisfies 
\begin{equation}\label{eq:genSRS}
s^T - s_0^T = - \laplB \theta_X
\end{equation}
for a Hamiltonian holomorphic vector field $X$ and where $s_0^T = \frac 1 {\vol(M)} \int_M s^T \mu$ is the average transverse scalar curvature of $g$ and $\mu = (\frac 1 2 d\eta)^n \wedge \eta$ is the volume form as before.
\end{defi}
This is of course a generalization of Sasaki-Ricci solitons.

An ``imaginary'' version of  Lemma \ref{lemmaz0} can be stated as follows. See \cite{LBS} for the K\"ahlerian counterpart.
\begin{lemma} Let $(M, \call S)$ be a Sasakian manifold.
The transverse field $X$ can be expressed as $X = \desharp f$ for a real basic function $f$ if, and only if, $V = \Im \desharp f$ is Killing for $g^T$. In this case $V$ lifts to $\tilde V \in \lie{aut}(\call S)$. Conversely, if $\tilde V \in \lie{aut}(\call S)$ then its projection is $\Im \desharp f$ for the real function $f = -\eta(\tilde V)$.
\end{lemma}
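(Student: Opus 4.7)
The plan is to follow the template of \cite[Lemma 2.11]{vancov}, interchanging the roles of real and imaginary parts. For a real basic function $f$, the decomposition of the transverse gradient into types gives
\[
\desharp f \;=\; \tfrac{1}{2}\bigl(\nabla^T f - i J\nabla^T f\bigr),
\]
so $V := \Im\desharp f = -\tfrac{1}{2}J\nabla^T f$ is, up to a sign, the transverse Hamiltonian vector field of $f$. From $\omega^T = g^T(J\,\cdot\,,\,\cdot\,)$ one obtains $\iota_V d\eta = df$, and hence $\lieder_V \eta = df$ using $\iota_V\eta = 0$.

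To prove the \emph{iff}, I would invoke the standard characterization that on a transversally K\"ahler structure a vector field is $g^T$-Killing if and only if it is symplectic for $\omega^T$ and its $(1,0)$-part is transversally holomorphic. The symplectic condition $\lieder_V\omega^T = 0$ is automatic from the Hamiltonian nature of $V$, so $V$ being $g^T$-Killing is equivalent to $\desharp f = V^{1,0}$ being transversally holomorphic, which is exactly the statement that $X = \desharp f$ is Hamiltonian holomorphic with real potential $f$.

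Next I construct the lift. Set $\tilde V := V - f\xi$, so that $\eta(\tilde V) = -f$. A short Cartan-formula computation using $\lieder_\xi\eta = 0$ and $\iota_\xi d\eta = 0$ yields
\[
\lieder_{\tilde V}\eta \;=\; \lieder_V\eta - \lieder_{f\xi}\eta \;=\; df - df \;=\; 0.
\]
Decomposing $g = g^T + \eta\otimes\eta$, the $\eta\otimes\eta$ summand is now preserved by $\tilde V$; for the $g^T$-summand, $\lieder_V g^T = 0$ is the hypothesis while $\lieder_{f\xi}g^T = 0$ follows from $\iota_\xi g = \eta$, $\lieder_\xi g = 0$ and $g^T = g - \eta\otimes\eta$. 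Hence $\tilde V \in \lie{aut}(\call S)$.

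For the converse, given $\tilde V\in\lie{aut}(\call S)$, put $f := -\eta(\tilde V)$. Centrality of $\xi$ in $\lie{aut}(\call S)$ gives $[\xi,\tilde V]=0$, whence $\xi(f)=0$ and $f$ is basic. Writing $\tilde V = V - f\xi$ with $V \in \Gamma(D)$, the identity $\lieder_{\tilde V}\eta = 0$ rearranges to $\iota_V d\eta = df$, and applying $J$ recovers $V = -\tfrac{1}{2}J\nabla^T f = \Im\desharp f$; the $g^T$-Killing property of $V$ descends from the $g$-Killing property of $\tilde V$ via the same splitting. The main technical delicacy I anticipate will be pinning down sign conventions and checking that $\lieder_{f\xi}g^T$ really vanishes with no stray $df\otimes\eta$ contributions; once that is done, the rest is routine transverse-K\"ahler book-keeping.
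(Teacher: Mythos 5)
Your treatment of the lift and of the final converse is sound and matches the paper: setting $\tilde V = V - f\xi$, checking $\lieder_{\tilde V}\eta = df - df = 0$ by the Cartan formula, and splitting $g = g^T + \eta\otimes\eta$ to verify $\lieder_{\tilde V}g=0$ is exactly the paper's construction (and you are in fact more explicit than the paper about the metric part). The gap is in the first equivalence. The lemma concerns a Hamiltonian holomorphic field $X=\desharp f$ whose potential is \emph{a priori complex}, and the substantive direction is: if $V=\Im X$ is transversally Killing, then the potential \emph{can be chosen} real. Your argument never leaves the class of real potentials: you fix a real $f$, note that $V$ is then Hamiltonian (hence symplectic), and conclude that Killing is equivalent to transverse holomorphy of $\desharp f$. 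That establishes ``for real $f$, $\desharp f$ is transversally holomorphic iff $\Im\desharp f$ is Killing,'' which is a different and weaker statement; it gives no control over a field whose potential is $f=u+iv$ with $v$ nonconstant, so the ``if'' half of the asserted equivalence is not proved.

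The missing step is precisely what the paper's proof supplies. Writing $f=u+iv$ and $X = JV + iV$, one takes the imaginary part of $\iota_{\desharp f}\omega^T = i\debar f$ to express $\iota_V\omega^T$ in terms of the differentials of the two real components of $f$; since a transversally Killing field here preserves $\omega^T$, one gets $d\iota_V\omega^T=0$, i.e.\ $\de\debar$ of one real component of $f$ vanishes, and compactness forces that component to be constant, so $f$ is real up to an additive constant. Without an argument of this kind --- equivalently, without ruling out a nonconstant gradient summand coming from $\Im f$ hiding inside the Killing field $\Im X$ --- the equivalence is incomplete. (Your use of ``Killing iff symplectic with holomorphic $(1,0)$-part'' is fine for the forward direction, where the transverse holomorphy of $X$ is the standing hypothesis of the section, and your sign bookkeeping for $\iota_V d\eta = df$ and $f=-\eta(\tilde V)$ is consistent with the paper's conventions.)
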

\begin{proof}
Let $X = \desharp f$ with $f$ real and let $V = \Im X$. Then we notice, since $\omega^T$ real, that $V$ has $f/2$ as Hamiltonian function with respect to the transverse symplectic form. Indeed 
\begin{equation*}
\iota_V \omega^T = \frac 1 {2i} (\iota_{\desharp f} - \iota_{\bar{\desharp f}}) \omega^T = \frac 1 2 (\debar f + \de f) = \frac 1 2 df.
\end{equation*}
So $\lieder_V \omega^T = 0$. Conversely, let $X = \desharp f = iV + J V$ with $V$ transversally Killing and $f = u+iv$. Then taking the imaginary part of the equation $\iota_{\desharp f} \omega^T = i (\debar u + i \debar v)$ we have $\iota_V \omega^T = \debar u$ and hence $\de \debar u = 0$ so $u$ is constant.
In this case, to extend $V$ to a $\tilde V \in \lie{aut}(\call S)$ we need to find a function $a$ such that $\tilde V = a \xi + V$ is contact. This means
\[
\lieder_{\tilde V} \eta = da + \iota_V d\eta = da + df = 0
\]
so we see that we can lift $V$ to $\tilde V$ if, $a = -f$.
Conversely $\tilde V =-f \xi + V$ being contact means that $0=d(\eta(\tilde V)) + \iota_V d\eta = -df + \iota_V d\eta$ hence $V= \Im \desharp f$.
\end{proof}
\subsection{Main result}
As in \cite{vancov}, let us now fix a compact connected $G \subseteq \Aut(\mathcal S)^0$ with Lie algebra $\lie g$ with center $\lie z$ and such that $\xi \in \lie z \subseteq \lie g$.
Then it makes sense to consider $\bar{\lie z}$, whose elements are transversally Killing and imaginary parts of projected  Hamiltonian holomorphic fields whose potentials are $G$-invariant.

We want to apply to $\call S$ a deformation as in \eqref{alldefo}, so parameterized by $(\paramt) \in \call B \times \lie z \times \CinfB(M)$.

Start with a basis $\{ v_0 = \xi, v_1, \ldots, v_d \}$ of $\lie z$ and let $X_j = i \bar v_j + J \bar v_j$ in a way that $\Im X_j = \bar v_j$. Consider the functions (depending on the Sasakian structure) 
\begin{equation} \label{basep}
p_\paramt^0 = 1 \text{ and }p_\paramt^j = -\eta_\paramt(v_j).
\end{equation}
Let $Y_j = -i p_g^j \xi + \desharp p_g^j$. It is Hamiltonian holomorphic and the functions $p_g^j$ acts as a holomorphy potential like the K\"ahler case.

Let $\call H_g^{\lie p}$ be, for any Lie algebra $ \xi \in \lie p \subseteq \lie{aut}(\call S)$, the space of functions $u$ such that $\desharp u$ lies in the complexified quotient $ \bar{\lie p}^\C$.
 
A metric defines an orthogonal splitting of $H^k(M)^G$ as
\[
 H^k(M)^G = \call H_g^{\lie z} \oplus W_g
 \]
Let $\Pi^\perp_g$ be the projection onto $W_g$.
We will consider the function
\begin{equation}
S(\paramt) := \Pi_g^\perp \Pi_\paramt^\perp G_\paramt(s_\paramt^T - s_\paramt^0) 
\end{equation}
where $G_\paramt$ is the Green operator of $\dB$ with respect to the metric $g_\paramt$.
For the metric $g_\paramt$ to be a generalized SRS we need $ G_\paramt(s_\paramt^T - s_\paramt^0)$ to lie in $\call H_\paramt^{\lie z} := \call H_{g_\paramt}^{\lie z}$, so $S(\paramt)=0$.
Since $\ker (\Pi_g^\perp \circ  \Pi_\paramt^\perp) = \ker \Pi_\paramt^\perp $ if the deformation is small enough, we have 
\[
S\colon \call V \subseteq  \call B \times \lie z \times H^k(M)^G \rightarrow W_g
\]
for $\call V$ a neighborhood of $(0,0,0)$.
Let us compute the derivatives of $S$. The derivative along $\phi$ behaves as in the K\"ahler case.

\begin{lemma}[\cite{canonical, FOW}]As in the K\"ahler case, the variation of the scalar curvature under type II deformations is
\begin{equation}\label{eq:Dscal}
D_\phi s_\phi^T|_{\phi=0} (\psi) = - \frac 1 2 \laplB^2 \psi - 2(\rho^T, i \deB \debarB \psi).
\end{equation}
Moreover, the average scalar curvature is constant.
\end{lemma}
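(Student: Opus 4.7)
The plan is to reduce the computation to the classical K\"ahler case, which it formally mimics. Under a type II deformation $\tilde\eta = \eta + \dB^c \phi$ the transverse K\"ahler form varies by $\omega_\phi^T = \omega^T + i\deB\debarB \phi$, exactly as in a K\"ahler class deformation. Because the basic operators $\deB,\debarB,\laplB$ on $\basicforms^{*,*}(M)$ obey the same algebraic identities as their K\"ahler counterparts (basic K\"ahler identities, basic $\deB\debarB$-lemma, and the trace identity relating $i\deB\debarB$ with $\laplB$), my plan is to transport the standard computation of the variation of $s$ from the K\"ahler literature to the transverse setting, carefully tracking the factors.

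Concretely, I would first compute $D_\phi \rho_\phi^T|_{\phi=0}(\psi)$. Writing
\[
\rho_\phi^T = \rho^T - i\deB\debarB \log\bigl((\omega_\phi^T)^n/(\omega^T)^n\bigr)
\]
and differentiating at $\phi=0$, the top-form trace identity yields $D_\phi \rho_\phi^T|_0(\psi) = -\tfrac{1}{2}\, i\deB\debarB \laplB \psi$. Next, I would differentiate the pointwise identity
\[
s_\phi^T \cdot (\omega_\phi^T)^n = 2n\, \rho_\phi^T \wedge (\omega_\phi^T)^{n-1}
\]
in the direction $\psi$. Two contributions appear: varying $\rho^T$ and tracing against $\omega^T$ produces the fourth order term $-\tfrac{1}{2} \laplB^2 \psi$; varying the wedge power $(\omega^T)^{n-1}$, together with the compensating variation of the volume factor on the left-hand side, gives the inner product term $-2(\rho^T, i\deB\debarB \psi)$. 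Adding these recovers the stated formula.

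For the second assertion, I would interpret $\int_M s^T\mu$ cohomologically. Since $s^T \mu = 2n\, \rho^T \wedge (\omega^T)^{n-1} \wedge \eta$, integration over $M$ gives a pairing of basic cohomology classes against $[\eta]$, namely $\int_M s^T \mu = 4\pi n\, \cB \cdot [\omega^T]_B^{n-1}\cdot [\eta]$, and similarly $\vol(M,g) = [\omega^T]_B^n \cdot [\eta]$. A type II deformation fixes both $\xi$ and the basic K\"ahler class $[\omega^T]_B \in \HB^{1,1}(M)$, so both numerator and denominator are topological invariants of the deformation; hence the average $s_0^T = \int_M s^T \mu /\vol(M)$ is independent of $\phi$.

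The main point of care, rather than a genuine obstacle, is verifying that the basic Hodge theoretic tools used above truly transport verbatim from the K\"ahler case: the trace identities between $i\deB\debarB$ and $\laplB$ on basic functions, the invariance of the basic volume class under a K\"ahler potential deformation, and the basic $\deB\debarB$-lemma. These are all standard for transversely K\"ahler foliations with minimal leaves and are recorded in \cite{monoBG} and used in \cite{FOW, canonical}, so granting them the argument is bookkeeping of signs and constants.
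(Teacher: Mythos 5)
Your computation is correct, and in fact the paper offers no proof of this lemma at all --- it is quoted from \cite{canonical, FOW} --- so you are reconstructing the standard argument from those references rather than competing with one in the text. Your route (differentiate $\rho^T_\phi=\rho^T-i\deB\debarB\log((\omega^T_\phi)^n/(\omega^T)^n)$ to get $\dot\rho^T=-\tfrac12 i\deB\debarB\laplB\psi$, then differentiate the trace identity $s^T(\omega^T)^n=2n\,\rho^T\wedge(\omega^T)^{n-1}$, using that $\laplB=2\Delta_{\debarB}$ on basic functions to fix the constants) is exactly the transverse transcription of the K\"ahler linearization, and the signs and factors come out as stated. The only point I would tighten is the second assertion: $\eta$ is not closed, so ``pairing against $[\eta]$'' is not literally a cohomological cup product. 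The correct statement is that for basic closed forms $\alpha$ of total degree $2n$ the transverse integral $\int_M\alpha\wedge\eta$ depends only on $[\alpha]_B$ and is unchanged when $\eta$ is replaced by $\eta+\dB^c\phi$, because any basic $(2n+1)$-form on $M^{2n+1}$ vanishes (it is annihilated by $\iota_\xi$ with $\xi$ nowhere zero), which is what makes the integration by parts close up; with that phrasing your argument that both $\int_M s^T\mu$ and $\vol(M)$ are determined by $[\rho^T]_B$ and $[\omega^T]_B$, hence constant along the deformation, is complete.
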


The derivative of $S$ along $\phi$ is
\begin{equation}\label{DS}
D_\phi S \restr		= \Pi_g^\perp (D_\phi \Pi_\paramt) \restr \theta_X + \Pi_g^\perp D_\phi A_\paramt \restr
\end{equation}
where $A_\paramt = G_\paramt(s_\paramt^T - s_\paramt^0)$.

Let $\{ f_\paramt^j \}$ be obtained from \eqref{basep} via the Gram-Schmidt procedure with respect to the weighted $L^2$ product 
\[
\langle f, h \rangle_\paramt = \int_M fh e^{\theta_X} \mu_\paramt.
\]
In particular it is $f_\paramt^0 = \vol_\paramt^{-1/2}$ and 
\begin{equation} \label{f1}
f_\paramt^1 = \frac{ p_\paramt^1 - \langle p_\paramt^1, 1 \rangle_\paramt \frac{1}{\vol_\paramt}}{\|p_\paramt^1 - \langle p_\paramt^1, 1 \rangle_\paramt \frac{1}{\vol_\paramt}\|_\paramt}.
\end{equation}
Now we have
\begin{align*}
 (D_\phi \Pi_\paramt) \restr \theta_X	&= \sum_{j=0}^d \langle f_{0,0}^j, \theta_X \rangle_g D_\phi f_\paramt^j \restr \pmod {\ker \Pi_g^\perp} \\
 								&= D_\phi f_\paramt^1 \restr \pmod {\ker \Pi_g^\perp}\\
\end{align*}
as $ \langle f_{0,0}^j, \theta_X \rangle_g = \delta_{1,j}$.
Deriving \eqref{f1} we have 
\begin{equation}
D_\phi f_\param^1 \restr (\psi)= D_\phi p_\paramt^1 \restr (\psi)=X \psi \pmod{\ker \Pi_g^\perp}
\end{equation}
 because $\Pi_g^\perp$ kills the constants.

Now, deriving the relation $(-2 i \partial \bar \partial A_\paramt, \frac 1 2 d\eta_\param) = s_\param^T - s_\param^0$ we have
\[
(-2 i \de \debar D_\phi A_\param \restr(\psi), \frac 1 2 d\eta) + (-2 i \de \debar A_\param, \frac 1 2 dd^c \psi) = - \frac 1 2 \Delta^2 \psi - 2(\rho^T, i \de \debar  \psi).
\]
So we have
\begin{align} \label{DA}
\Delta_g D_\phi A_\param \restr(\psi)	&= 2(i \de \debar \theta_X, i \de \debar \psi) - \frac 1 2 \Delta_g^2 \psi - 2(\rho^T, i \de \debar \psi) \nonumber \\
								&= (2n+2) \Delta_g \psi - \frac 1 2 \Delta_g^2 \psi
\end{align}
where we have used that $2(i \de \debar \theta_X, i \de \debar \psi) = 2(\rho^T, i \de \debar \psi) + (2n+2)\Delta_g \psi$ from the SRS equation. So we get
\begin{equation} \label{DA}
D_\phi A_\param \restr(\psi) = G_g \biggl ( -\frac 1 2 \Delta_g^2\psi + (2n+2) \psi \biggr ).
\end{equation}
Using \eqref{DA} in \eqref{DS} becomes
\begin{align}\label{DScomputed}
D_\phi S \restr(\psi) 	&= -\Pi_g^\perp \biggl (\frac 1 2 \Delta_g \psi -  X\psi - (2n+2) \psi \biggr ) \nonumber \\
				&= -\Pi_g^\perp (\bar L \psi)
\end{align}
where $\bar L$ is the operator defined in Section \ref{sec:inftransf}.
The derivative with respect to $\alpha$ can be computed similarly for the first summand but the computation \eqref{DA} cannot be repeated as the foliation changes. We then have
\begin{equation}
(D_\alpha \Pi_\param)\restr(\beta) = \Pi_g^\perp \biggl ( \eta(\beta) \theta_X + D_\alpha A_\param \restr(\beta) \biggr).
\end{equation}
\begin{oss}
A case where we can compute also the second term is given when $\dim \lie z = 1$, that is when there is room only for Tanno $D$-homothetic deformations \cite{tanno}. 
In this case $\alpha = a \xi$, so 
\begin{align*}
\xi_a 	&= (a+1) \xi\\
\eta_a	& = \frac 1 {a+1} \eta\\
g_a^T 	&= \frac 1 {a+1} g^T\\ 
 \Phi_a	& = \Phi.
\end{align*}
We have $\mu_a = (a+1)^{-(n+1)} \mu$ and so $s_a^T - s_a^0 =  (a+1)(s^T - s^0)$.

Being the characteristic foliation unchanged it makes sense to replicate the computation \eqref{DA} and we get $\Delta_a A_a = (a+1) (s^T - s^0)$ so $\Delta_g (D_a A_a) \restr - \Delta_g \theta_X = s^T - s^0 $. Using the generalized SRS equation we have that $(D_a A_a) \restr$ is a constant and hence killed by $\Pi_g^\perp$. So finally
\[
(D_\alpha S)\restr(\beta) = \Pi_g^\perp \biggl ( \eta(\beta) \theta_X \biggr).
\]
\end{oss}
In any case the derivative will assume the form
\begin{equation}
DS\restr(\beta, \psi) = - \Pi_g^\perp \biggl (\bar L \psi - \eta(\beta) \theta_X -  D_\alpha A_\param \restr(\beta) \biggr)
\end{equation}
\begin{prop} \label{prop:sur}
The map $DS\restr(0,0, \cdot):  H^k(M)^G \rightarrow W_g$ is surjective.
\end{prop}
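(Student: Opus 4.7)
The plan is to apply the Fredholm alternative to the self-adjoint elliptic operator $\bar L$. By \eqref{DScomputed} the map $DS\restr(0,0,\cdot)$ equals $\psi\mapsto-\Pi_g^\perp(\bar L\psi)$, so what must be shown is surjectivity of $\Pi_g^\perp\circ\bar L\colon H^k(M)^G\to W_g$.

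First I would observe that, although $\bar L$ is defined using Hermitian pairings, on real basic functions it coincides with $L$: for real $u,h$ the quantities $(\debar u,\debar h)$ and $(\debar h,\debar u)$ are complex conjugates of each other and each is real, hence equal. So $\bar L$ restricts to a real second-order elliptic operator on $H^k(M)^G$. By the lemma in Section \ref{sec:inftransf} it is formally self-adjoint with respect to the weighted $L^2$ product $\langle\cdot,\cdot\rangle_h$, so standard Fredholm theory for elliptic self-adjoint operators gives that the image of $\bar L$ on $G$-invariant functions is closed and equal to the $\langle\cdot,\cdot\rangle_h$-orthogonal complement of $\ker\bar L\cap H^k(M)^G$.

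The decisive step is the identification $\ker\bar L\cap H^k(M)^G=\mathcal H_g^{\lie z}$. The inclusion $\mathcal H_g^{\lie z}\subseteq\ker\bar L$ is immediate, since by Theorem \ref{teocorresp} every element of $\mathcal H_g^{\lie z}$ is a normalized holomorphy potential and hence lies in $\ker L=\ker\bar L$. For the converse, let $u\in\ker\bar L\cap H^k(M)^G$; then $X_u=-iu\xi+\desharp u$ is a $G$-invariant Hamiltonian holomorphic field. Using uniqueness of the soliton field together with $G\subseteq\Aut(\mathcal S)$, the field $\bar X$ lies in $\bar{\lie z}^{\mathbb C}$, so $[\bar X,\bar X_u]=0$; Theorem \ref{thm:decomp} then forces $\bar X_u$ into the centralizer $\bar{\lie k_0}\oplus J\bar{\lie k_0}$. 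Combining the identification $\bar{\lie k_0}\simeq\lie{aut}(\mathcal S)/\xi$ with the $G$-invariance of $\bar X_u$ (which means $\bar X_u$ commutes with all of $\bar{\lie g}$) places $\bar X_u$ in the center $\bar{\lie z}^{\mathbb C}$, so $u\in\mathcal H_g^{\lie z}$.

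Granted this, $\bar L(H^k(M)^G)=(\mathcal H_g^{\lie z})^{\perp_h}=W_g$, where the splitting of $H^k(M)^G$ is understood as $\langle\cdot,\cdot\rangle_h$-orthogonal; since $\Pi_g^\perp$ acts as the identity on $W_g$, the composition $\Pi_g^\perp\circ\bar L$ is surjective, proving the proposition. The hardest point is the reverse inclusion $\ker\bar L\cap H^k(M)^G\subseteq\mathcal H_g^{\lie z}$, which is precisely where the Sasaki-Ricci soliton hypothesis enters through the decomposition theorem; the remaining elliptic theory is routine.
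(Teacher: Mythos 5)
Your strategy is essentially the paper's: reduce via \eqref{DScomputed} to the operator $\Pi_g^\perp\circ\bar L$, use self-adjointness of $\bar L$ for the weighted product to identify the image with the orthogonal complement of the $G$-invariant kernel, and then argue that this kernel is exactly $\call H_g^{\lie z}$. The paper phrases this as triviality of the cokernel (take $\psi'\perp\operatorname{im}DS$, deduce $\bar L\psi'=0$, conclude $\desharp\bar{\psi'}\in\bar{\lie z}^\C$, contradiction with $\psi'\in W_g$); your direct computation of the image is the same argument read in the other direction.

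Two remarks. First, your claim that $\bar L$ coincides with $L$ on real functions is false: for real $u,h$ the pairing $(\debar u,\debar h)$ is \emph{not} real in general --- its imaginary part is essentially the transverse Poisson bracket of $u$ and $h$, and indeed $(L-\bar L)u=-2i\,J\tilde X\cdot u$, which is a nonzero purely imaginary function whenever $J\tilde X\cdot u\neq 0$; since $J\tilde X\neq 0$ is precisely the non-Einstein soliton case, $L\neq\bar L$ on real functions. The error is harmless here, because all you need is that a real $u$ with $\bar Lu=0$ also satisfies $Lu=0$, which follows from the identity $\bar Lu=\overline{L\bar u}=\overline{Lu}$, and ellipticity of $\bar L$ holds regardless since the extra term is first order; but you should replace the false justification by this one. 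Second, your route to the reverse inclusion $\ker\bar L\cap H^k(M)^G\subseteq\call H_g^{\lie z}$ via Theorem \ref{thm:decomp} is more elaborate than the paper's one-line assertion, yet it terminates at the same unproved jump: a $G$-invariant holomorphic field lies in the centralizer of $\bar{\lie g}$ inside $\bar{\lie h}$, which a priori can be strictly larger than $\bar{\lie z}^\C$ when $\lie{aut}(\call S)$ strictly contains $\lie g$; the identification $\bar{\lie k_0}\simeq\lie{aut}(\call S)/\xi$ does not close this. On that point you are no worse off than the paper, but your intermediate appeals to the decomposition theorem do not actually supply the missing step, so they should not be presented as doing so.
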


\begin{proof}
Let $\psi'$, orthogonal to the image of $DS\restr$, be a representative of a class in the cokernel. It must be then $\langle \psi', \Pi_g^\perp \bar L \psi \rangle =\langle \psi', \bar L \psi \rangle= 0$ for all $\psi \in H^k(M)^G$ and the scalar product is the weighted one. It follows that $\bar L^* \psi' = \bar L \psi' =0$.

This means that $\desharp_g \bar{\psi'}$ is a transversally holomorphic $G$-invariant vector field, so it belongs to $\lie z^\C$, a contradiction.
\end{proof}

Let now $K = \ker DS\restr \subseteq \call B \times \lie z \times H^k(M)^G$, let $\pi$ be the projection onto it and consider the map
\begin{equation} \label{mapG}
G:= S \times \pi: \mathcal V \rightarrow W_g \times K
\end{equation}
which is such that $DG\restr(0, \cdot)$ is invertible. We can now state the SRS analogue of  \cite[Thm.~4.7]{vancov}.
\begin{thm} \label{thm:defoSRS}
Let $\call S = (\eta, \xi, \Phi, g)$ be a  Sasaki-Ricci soliton, $G \subseteq \Aut(\call S)^0$ be a fixed compact connected subgroup of Sasaki transformations  of $(M, \call S)$ and such that $\xi \in \lie z \subseteq \lie g$. Let $(\call F_\xi, J_t)_{t \in \call B}$ be a $G$-equivariant deformation.

Then there is a neighborhood $ \call V$ of $(0,0,0) \in  \call B \times \lie z \times \CinfB(M)^G$ such that 
\[
\call E = \biggl \{ (\paramt) \in \call V: g_\paramt \text{ is a generalized SRS} \biggr \} 
\]
is a smooth manifold of dimension $\dim \call B +\dim \lie z$.
\end{thm}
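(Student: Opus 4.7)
The plan is to apply the inverse function theorem in Banach spaces to the map $G=S\times\pi\colon\call V\to W_g\times K$ defined in \eqref{mapG}, along the lines of \cite[Thm.~4.7]{vancov}. All the analytic preparation has already been done: $S$ vanishes exactly on generalized Sasaki--Ricci solitons, its differential in the $\phi$-direction is computed in \eqref{DScomputed} and the $\alpha$- and $t$-variations are worked out right after, and Proposition~\ref{prop:sur} provides the key surjectivity.

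First I would check that $DG|_{(0,0,0)}$ is a linear isomorphism. Since $\pi$ is a continuous linear projection onto the closed subspace $K$ along some complement $K^{\mathrm c}$ of $T_{(0,0,0)}\call V$, with respect to the splitting $T_{(0,0,0)}\call V=K\oplus K^{\mathrm c}$ one has
\[
DG|_{(0,0,0)}(v_K,v_{K^{\mathrm c}})=\bigl(DS|_{(0,0,0)}(v_{K^{\mathrm c}}),\,v_K\bigr).
\]
Invertibility therefore reduces to $DS|_{(0,0,0)}\colon K^{\mathrm c}\to W_g$ being bijective: surjectivity is Proposition~\ref{prop:sur} (already for the $\phi$-slot alone), and injectivity on $K^{\mathrm c}$ is automatic from $K^{\mathrm c}\cap K=\{0\}$.

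Second, the Banach-space inverse function theorem --- applicable because the assumption $k>n+5$ and the Sobolev embedding make the Green operator $G_{\paramt}$, the Laplacian $\laplB$, and the projections $\Pi^{\perp}_{\paramt}$ depend smoothly on $(\paramt)$ --- yields that $G$ restricts to a diffeomorphism of a neighbourhood $\call V'\subseteq\call V$ of the origin onto a neighbourhood of the origin in $W_g\times K$. Since $\call E\cap\call V'=S^{-1}(0)\cap\call V'=G^{-1}(\{0\}\times K)\cap\call V'$, this locally identifies $\call E$ with an open neighbourhood of $0\in K$ and so endows $\call E$ with a smooth manifold structure modelled on $K$.

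It remains to show that $K$ has dimension $\dim\call B+\dim\lie z$. The key input is that $\bar L$ annihilates $\call H^{\lie z}_g$: every $u\in\call H^{\lie z}_g$ is the holomorphy potential of a field in $\bar{\lie z}^{\mathbb C}$, and any $v\in\lie z$ centralizes $X$ --- one has $[v,\xi]=0$ since $\xi\in\lie z$, $v\cdot h=0$ because $v$ is a Sasaki automorphism and preserves (up to an integration-fixable constant) the Ricci potential, and therefore $[v,\desharp h]=\desharp(v\cdot h)=0$ --- so $u$ sits in the zero eigenspace $E_0$ of $\bar L|_{\ker L}$ from Theorem~\ref{thm:decomp}, on which $\bar L$ vanishes. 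Combining this with the splitting $H^k(M)^G=\call H^{\lie z}_g\oplus W_g$ and the explicit form of $DS|_{(0,0,0)}$ shows that the $\phi$-contribution to $K$ is exactly $\call H^{\lie z}_g$, and these directions correspond to infinitesimal reparametrizations by elements of $\lie z$; they can therefore be absorbed into the $\alpha$-factor, leaving the effective finite-dimensional parameter space $\call B\oplus\lie z$.

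I expect this last piece of bookkeeping --- disentangling genuine moduli directions from infinitesimal gauge transformations generated by $\lie z$ --- to be the main technical point. The inverse-function-theorem part itself applies cleanly once Proposition~\ref{prop:sur} is in hand.
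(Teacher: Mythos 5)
Your overall strategy is the same as the paper's: both arguments apply an inverse/implicit function theorem to the map $G=S\times\pi$ of \eqref{mapG}, with Proposition~\ref{prop:sur} supplying the surjectivity of the linearization, and your first two steps (invertibility of $DG\restr$ via the splitting $K\oplus K^{\mathrm c}$, and the local identification of $\call E$ with an open subset of $K$) are correct and are essentially what the paper's appeal to \cite[Thm.~17.6]{elliptic} packages.

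The gap is in the final dimension count. Since $DS\restr$ is linear and surjective on $\call B\times\lie z\times H^k(M)^G$, and $D_\phi S\restr$ alone is already surjective onto $W_g$, elementary linear algebra gives $\dim K=\dim\call B+\dim\lie z+\dim\ker\bigl(D_\phi S\restr\bigr)$: for each $(\tau,\beta)$ the set of admissible $\psi$ is a nonempty affine subspace modelled on $\ker(D_\phi S\restr)$. You correctly identify $\ker(D_\phi S\restr)$ with $\call H_g^{\lie z}$, which contains at least the constants and the potentials $p_g^j$, so it is nonzero and $\dim K$ strictly exceeds $\dim\call B+\dim\lie z$. The proposed remedy --- ``absorbing'' these directions into the $\alpha$-factor --- fails as stated: a variation $\psi\in\call H_g^{\lie z}$ is a type II direction (it fixes the Reeb field and moves $\eta$ by $d^c\psi$), whereas the $\lie z$-slot parameterizes type I deformations that change the Reeb field; these are linearly independent directions of the ambient parameter space, so nothing cancels and $\call E$, as literally defined in the statement, would come out too large. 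The paper instead uses (implicitly) that $D_\phi S\restr$ restricted to $W_g$ is an isomorphism onto $W_g$ --- injectivity on $W_g$ follows from the argument of Proposition~\ref{prop:sur} together with the $\bar L$-invariance of the splitting $H^k(M)^G=\call H_g^{\lie z}\oplus W_g$ --- and solves $S(t,\alpha,\phi)=0$ for $\phi=f(t,\alpha)\in W_g$ by the implicit function theorem, exhibiting the solution set as a graph over a neighborhood of $0$ in $\call B\times\lie z$. To close your version you must either restrict the $\phi$-slot to $W_g$ from the outset, or prove separately that moving in a $\ker(D_\phi S\restr)$-direction yields only reparameterizations of solutions already accounted for; the latter is a genuine extra argument, not bookkeeping.
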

\begin{proof}
We start with a SRS so $M$ is positive and the deformation is of $(1,1)$-type.

The map $G$ of \eqref{mapG} is under the assumptions of \cite[Thm.~17.6]{elliptic} so we have a neighborhood $\call N$ of zero in $\call B \times \lie z$ and a function $f: \mathcal N \rightarrow C_B^\infty(M)^G$ such that $S(t, \alpha,f(t, \alpha)) = 0$ for all $(t, \alpha) \in \call N$.
So the space of solutions of $S=0$ is parameterized by $(t, \alpha)$ and hence it has dimension $\dim \call B + \dim \lie z$.
\end{proof}
\bibliography{biblio}{}
\bibliographystyle{amsplain}
\end{document}